\newtheorem{proposition}{Proposition}[section]
\newtheorem*{theorem*}{Theorem}
\newtheorem{theorem}{Theorem}[section]
\newtheorem{corollary}{Corollary}[section]
\newtheorem{definition}{Definition}[section]
\newtheorem{example}{Example}[section]
\newtheorem{lemma}{Lemma}[section]
\newtheorem{remark}{Remark}[section]
\newtheorem{problem}{Problem}[section]
\numberwithin{equation}{section}
\begin{document}

\title{A quotient-like construction involving elementary submodels.}

\author{Peter Burton}

\date{\today}

\maketitle

\begin{abstract} 
This article is an investigation of a method of deriving a topology from a space and an elementary submodel containing it. We first define and give the basic properties of this construction, known as $X/M$. In the next section, we construct some examples and analyse the topological relationship between $X$ and $X/M$. In the final section, we apply $X/M$ to get novel results about Lindel\"{o}f spaces, giving partial answers to a question of F.D. Tall and another question of Tall and M. Scheepers. \end{abstract}

\begin{section}{Definitions and preliminaries.} We assume all spaces are $T_{3\frac{1}{2}}$ - since we are primarily interested in strengthenings of the Lindel\"{o}f property, this causes no serious loss of generality. Given an elementary submodel $M \prec H_\lambda$ and a space $X$ with $X \in M$, we will refer to the classical notion of `subspace with respect to $M$' as $X_M$. Recall that this is the topology on $X \cap M$ generated by the family $(U \cap M: U \in M$ is open in $X)$ [7]. Our object of study is $X/M$, which can be seen as analogous to $X_M$ where `subspace' is replaced by `quotient'. The construction of $X/M$ was introduced independently by I. Bandlow in [1] and A. Dow in [2], and discussed further by T. Eisworth in[3]. There are two related ways to define $X/M$; we give the more elementary method first.

\begin{definition}\label{def1} Define an equivalence relation on $X$ by letting $x_0 \sim x_1$ if and only if $f(x_0) = f(x_1)$ for all continuous $f:X \to \mathbb{R}$ such that $f \in M$. Let $X/M$ be the resulting quotient set and write $\pi:X \twoheadrightarrow X/M$ for the projection. We topologize $X/M$ by taking a base to be all sets of the form $\pi(U)$, where $U \in M$ is a cozero set in $X$. \end{definition}

\begin{definition}\label{def2} Since $X$ is $T_{3\frac{1}{2}}$, we can consider the natural embedding $e: X \hookrightarrow [0,1]^{C^*(X)}$ given by $e(x) = (f(x))_{f \in C^*(X)}$. There is also a natural map $F$ from $[0,1]^{C^*(X)}$ to $[0,1]^{C^*(X) \cap M}$, given by $(x_f)_{f \in C^*(X)} \mapsto (x_f)_{f \in C^*(X) \cap M}$. Define $X/M$ to be $F \circ e(X)$. \end{definition}

The equivalence of these formulations is proven in [3], with the correspondence given by sending the equivalence class $[x] \in X/M$ of a point in $x \in X$ to $F \circ e(x)$. We will use them interchangably. We now summarize the relevant basic properties of $X/M$. 

\begin{lemma}\label{lem3} \begin{enumerate} \item $X/M$ is a continuous image of $X$,
\item $x_0 \sim x_1$ exactly when $x_0 \in U$ if and only if $x_1 \in U$ for each cozero $U \in M$,
\item $X/M$ is a $T_{3\frac{1}{2}}$ space,
\item $X_M$ embeds as a dense subspace of $X/M$; we will identify $X_M$ with the copy $\{ \pi(x):x \in X \cap M\} \subseteq X/M$, and
\item $[x] = \bigcap \{Z: x \in Z \in M$ and $Z$ is a zero set $\} = \bigcap \{\overline{U}: x \in U \in M$ and $U$ is a cozero set $\}$. \end{enumerate}\end{lemma}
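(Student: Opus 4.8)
The plan is to take the five items in the order they are listed, freely using both formulations of $X/M$ (their equivalence being cited above): Definition~\ref{def2} is convenient for (1) and (3), and Definition~\ref{def1} for (2), (4), (5). The one tool invoked over and over is elementarity of $M$: whenever I need a continuous function or an open set with a property definable from parameters in $M$, such an object can be found in $M$. Two consequences are used constantly — that a cozero set $U \in M$ equals the cozero set $\operatorname{coz}(f)$ of some $f \in M$, so that one may write $U = \bigcup_n \{\, |f| \ge 1/n\,\}$ as a countable union of zero sets each of which lies in $M$; and that if $U \in M$ then $X \setminus U \in M$.

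For (1), under Definition~\ref{def2} the map $F \circ e \colon X \to X/M$ is a composition of continuous maps, hence a continuous surjection onto $X/M$ with its subspace topology; under Definition~\ref{def1} it instead suffices to check that $\pi$ is continuous, which reduces to the identity $\pi^{-1}(\pi(U)) = U$ for cozero $U \in M$, and this is immediate from the definition of $\sim$ and the fact that $U = \operatorname{coz}(f)$ with $f \in M$. Part (3) is then immediate from Definition~\ref{def2}, since $X/M$ is a subspace of the Tychonoff cube $[0,1]^{C^*(X) \cap M}$.

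For (2), the forward direction uses $U = \operatorname{coz}(f)$ with $f \in M$, so that $x_0 \in U \iff f(x_0) \ne 0 \iff f(x_1) \ne 0 \iff x_1 \in U$. For the converse, if $x_0 \not\sim x_1$ take a continuous $f \in M$ with $f(x_0) \ne f(x_1)$; composing with a homeomorphism $\mathbb{R} \to (0,1)$ that lies in $M$ we may assume $f$ is $(0,1)$-valued, and then for any rational $r \in M$ strictly between $f(x_0)$ and $f(x_1)$ the cozero set $f^{-1}([0,r)) \in M$ contains exactly one of $x_0, x_1$. Part (4) combines these: the map $x \mapsto \pi(x)$ on $X \cap M$ is injective because distinct points of $X \cap M$ are separated by some $f \in M$ (as $X$ is $T_{3\frac12}$ with $X \in M$); it is a homeomorphism onto its image by (2), once one notes that the traces on $X \cap M$ of cozero sets belonging to $M$ form a base for $X_M$ (again by elementarity, since cozero sets are a base for $X$); and the image is dense because a nonempty cozero $U \in M$ meets $M$, and any point of $U \cap M$ maps into $\pi(U)$.

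The substantive item is (5), and I expect the second equality to be the main obstacle, being the only place needing a genuine construction rather than bookkeeping. For the first equality, use (2) to rephrase $y \in [x]$ as ``$x$ and $y$ lie in the same cozero sets of $M$'', equivalently in the same zero sets of $M$ (complementation inside $M$ identifies the two families). The inclusion $[x] \subseteq \bigcap\{Z : x \in Z \in M \text{ a zero set}\}$ is then clear, and for the reverse, if $y \not\sim x$ pick a cozero $U \in M$ containing exactly one of $x, y$ and use $U = \bigcup_n \{\, |f| \ge 1/n \,\}$ (or $X \setminus U$, in the case $x \notin U$) to extract a zero set of $M$ that contains $x$ but not $y$. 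For the second equality, $\subseteq$ follows from the first, since a cozero $U \in M$ with $x \in U$ contains one of the zero sets $\{\, |f| \ge 1/n\,\} \ni x$, whence $[x] \subseteq \overline{U}$. For $\supseteq$, if $y \not\sim x$ take $f \in M$ continuous with $f(x) \ne f(y)$, arrange as above that $f$ is $(0,1)$-valued and (replacing $f$ by $1 - f$ if needed) that $f(x) < f(y)$, choose rationals $p < q$ in $M$ with $f(x) < p < q < f(y)$, and set $U = f^{-1}([0,p)) \in M$: then $x \in U$ while $\overline{U} \subseteq f^{-1}([0,p])$ misses $y$. Throughout, the only recurring care is checking that each named function or set belongs to $M$, which is in every instance a one-line appeal to elementarity from parameters already in $M$.
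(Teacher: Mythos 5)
Your proof is correct. Note, though, that the paper does not actually prove this lemma: it dismisses (1) as trivial, (2) as immediate from Definition~\ref{def1}, justifies (3) by the embedding into $[0,1]^{C^*(X)\cap M}$ exactly as you do, and cites Eisworth [3] for (4) and (5). So for (1)--(3) you follow the paper's route (with the useful extra detail for (2) that a cozero set in $M$ is $\operatorname{coz}(f)$ for some $f\in M$ by elementarity, and the converse via a rational cut-point --- the paper's ``immediate'' elides exactly this), while for (4) and (5) you supply self-contained arguments where the paper has only a reference. Your treatment of (5) is sound: writing a cozero $U=\operatorname{coz}(f)\in M$ as the union of the zero sets $\{|f|\ge 1/n\}\in M$ handles both inclusions of the first equality and the easy inclusion of the second, and the $(0,1)$-valued reparametrization with a rational $p$ strictly between $f(x)$ and $f(y)$ gives a cozero $U\in M$ with $x\in U$ and $y\notin\overline{U}$, which is the only genuinely constructive step. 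Your density argument in (4) --- a nonempty cozero set of $M$ meets $X\cap M$ by elementarity, and $\pi(U\cap M)=\pi(U)\cap\pi(X\cap M)$ --- is the standard one and is what makes the identification of $X_M$ with $\{\pi(x):x\in X\cap M\}$ legitimate. The net effect is that your write-up proves more than the paper records, and could stand in for the citation to [3].
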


Here, $(4)$ and $(5)$ are from [3]. $(1)$ is trivial, while $(2)$ is immediate from Definition \ref{def1}. $(3)$ follows from the fact that Definition \ref{def2} embeds $X/M$ in the compact space $[0,1]^{C^*(X) \cap M}$. We also have the following lemmas, which will prove useful in our later applications.

\begin{lemma}\label{lem9} Suppose $U \in M$ is a cozero set. Then, $\pi(U)$ is an open set in $X/M$ and $\pi^{-1}\pi(U) = U$. \end{lemma}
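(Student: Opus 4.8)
The plan is to separate the statement into its two assertions and dispatch each directly from the definitions already in hand. The openness of $\pi(U)$ is immediate: by Definition \ref{def1} the collection of all sets $\pi(V)$ with $V \in M$ a cozero set of $X$ is a base for the topology on $X/M$, so $\pi(U)$ is literally a basic open set. (It is worth remarking in passing that $X/M$ does not carry the quotient topology a priori, so this is not automatic from $\pi$ being a quotient map — it is a feature of how the topology on $X/M$ was declared.) Hence all the real content is in the saturation identity $\pi^{-1}\pi(U) = U$.

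For that identity, the inclusion $U \subseteq \pi^{-1}\pi(U)$ is trivial, so I would only argue the reverse inclusion. Let $x \in \pi^{-1}\pi(U)$; then $\pi(x) = \pi(y)$ for some $y \in U$, which by definition of the quotient set means $x \sim y$. Now invoke Lemma \ref{lem3}(2): since $x \sim y$, for every cozero set $W \in M$ we have $x \in W$ if and only if $y \in W$. Applying this with $W = U$ (which is a cozero set and lies in $M$ by hypothesis) and using $y \in U$, we conclude $x \in U$. This gives $\pi^{-1}\pi(U) \subseteq U$ and completes the proof.

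I do not anticipate any genuine obstacle here — the lemma is essentially an unwinding of Definition \ref{def1} together with the reformulation recorded in Lemma \ref{lem3}(2), and the only thing to be careful about is not conflating the declared topology on $X/M$ with the quotient topology. If one preferred to avoid Lemma \ref{lem3}(2), the same conclusion follows from Lemma \ref{lem3}(5): writing $U$ as a cozero set in $M$, the fiber $[x]$ is contained in $\overline{U}$ whenever $x \in U$, and one checks the fibers meeting $U$ are exactly those contained in $U$; but the argument via part (2) is the shortest and is the one I would present.
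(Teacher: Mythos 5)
Your proof is correct and follows essentially the same route as the paper: the openness of $\pi(U)$ is read off from Definition \ref{def1}, and the saturation identity reduces to the fact that $M$-equivalent points cannot be separated by a cozero set in $M$. The only cosmetic difference is that you cite Lemma \ref{lem3}(2) for this last fact, whereas the paper re-derives the relevant instance directly by choosing a witnessing function $f_0 \in M$ with $U = f_0^{-1}(0,1]$.
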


\begin{proof} The first conclusion is obvious from \ref{def1}, so suppose $\pi(x) \in \pi(U)$. This means that $x \sim x_0$ for some $x_0 \in U$. Thus, $f(x) = f(x_0)$ for every continuous $f: X \to \mathbb{R}$ with $f \in M$. But, since $U \in M$, there is a continuous $f_0 \in M$ with $U = f_0^{-1}(0,1]$. Hence, $f_0(x) = f_0(x_0) \in (0,1]$, which implies that $x \in U$. \end{proof}

\begin{lemma}\label{lem10} Suppose $U \in M$ is a cozero set. Then for any $M$-equivalence class $[x]$, if $[x] \cap U \neq \emptyset$ then $[x] \subseteq U$. \end{lemma}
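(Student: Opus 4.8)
The plan is to derive this directly from the cozero-set characterization of $M$-equivalence in Lemma \ref{lem3}(2), or, what amounts to the same thing, from the saturation identity $\pi^{-1}\pi(U) = U$ proved in Lemma \ref{lem9}. First I would fix a witness: assuming $[x] \cap U \neq \emptyset$, pick a point $x_0 \in [x] \cap U$, so that $x_0 \sim x$ and $x_0 \in U$.

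Next I would use the hypothesis $U \in M$ together with Lemma \ref{lem3}(2). Since $U$ is a cozero set belonging to $M$ and $x_0 \in U$, that characterization tells us that ``lying in $U$'' is an invariant of the $\sim$-class of $x_0$: every $y \sim x_0$ satisfies $y \in U$. As $\sim$ is an equivalence relation and $x \sim x_0$, we have $[x] = [x_0]$, and therefore $[x] \subseteq U$, as desired.

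Equivalently — and this is really the same argument phrased through the quotient — one observes that $\pi(x) = \pi(x_0) \in \pi(U)$, whence $[x] = \pi^{-1}(\{\pi(x)\}) \subseteq \pi^{-1}\pi(U)$, and Lemma \ref{lem9} identifies $\pi^{-1}\pi(U)$ with $U$.

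I do not expect any genuine obstacle: the statement is essentially the assertion that a cozero set coming from $M$ is a union of $\sim$-classes (its $\pi$-preimage is saturated), which is precisely what the preceding lemmas record. The one point requiring care is to make visible where $U \in M$ is used — without that hypothesis $U$ need not be a union of equivalence classes, so the conclusion can fail.
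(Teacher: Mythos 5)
Your argument is correct and is essentially the paper's own proof: the paper likewise deduces the claim in one line from Lemma \ref{lem3}(2), noting that if $x \in U$ and $x_1 \sim x$ then $x_1 \in U$. Your alternative phrasing via the saturation identity of Lemma \ref{lem9} is a harmless repackaging of the same fact.
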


\begin{proof} Lemma \ref{lem3}(2) implies that if $x \in U$ and $x_1 \sim x$, then $x_1 \in U$.\end{proof}

We now provide a new characterization of $X/M$ as `universal' among images of $X$. Fix a space $X$ and a model $M$ and say that a pair $(Y,F)$, $F:X \to Y$ is \textbf{described by} $\mathbf{M}$ if $Y$ has a base $\mathcal{B}$ such that $F^{-1}(B)$ is a cozero set in $M$ for every $B \in \mathcal{B}$. Note that if $(Y,F)$ is described by $M$, then $F$ is continuous. Also note that we do not require $Y$ or $F$ to be members of $M$. This ensures that $(X/M, \pi)$ is described by $M$, along with any pair $(Y, F)$ for which $F \in M$ and $Y$ has a base included in $M$.

\begin{theorem}\label{thm1} Suppose $X \in M$ and let $(Y,F)$ be described by $M$. Then there is a continuous surjection $f: X/M \to Y$ such that the following diagram commutes. 

\[
\xymatrix{
X \ar[dd]^{\pi} \ar[ddrr]^{F} \\
\\
X/M \ar[rr]_f && Y} \] Moreover, $(X/M, \pi)$ is, up to a homeomorphism, the unique pair described by $M$ for which this conclusion holds.
 \end{theorem}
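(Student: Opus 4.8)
The plan is to define $f$ by the evident formula $f([x]) = F(x)$ and verify three things — well-definedness, continuity, and surjectivity — after which the uniqueness clause follows from the standard argument that an object with a universal mapping property is determined up to a unique compatible isomorphism.

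For well-definedness, suppose $x_0 \sim x_1$ but $F(x_0) \neq F(x_1)$. Since $Y$ is $T_{3\frac{1}{2}}$, hence $T_1$, and $\mathcal{B}$ is a base, there is $B \in \mathcal{B}$ with $F(x_0) \in B$ and $F(x_1) \notin B$. Then $x_0 \in F^{-1}(B)$ and $x_1 \notin F^{-1}(B)$, and $F^{-1}(B)$ is a cozero set in $M$, contradicting Lemma \ref{lem3}(2). So $f$ is a well-defined function with $f \circ \pi = F$, which is exactly commutativity of the diagram. Replacing $Y$ by the subspace $F(X)$ — whose base $\{B \cap F(X) : B \in \mathcal{B}\}$ still has cozero-in-$M$ preimages, since $F^{-1}(B \cap F(X)) = F^{-1}(B)$ — we may assume $F$ is onto, whence $f(X/M) = f(\pi(X)) = F(X) = Y$ and $f$ is surjective.

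For continuity it suffices to check that $f^{-1}(B)$ is open for each $B \in \mathcal{B}$. I claim $f^{-1}(B) = \pi(F^{-1}(B))$: the inclusion $\supseteq$ is immediate from $f \circ \pi = F$, and for $\subseteq$, if $f([x]) \in B$ then $F(x) = f([x]) \in B$, so $x \in F^{-1}(B)$ and $[x] \in \pi(F^{-1}(B))$. Since $F^{-1}(B)$ is a cozero set in $M$, Lemma \ref{lem9} gives that $\pi(F^{-1}(B))$ is open in $X/M$, so $f$ is continuous.

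For uniqueness, suppose $(Z, \rho)$ is also described by $M$, with $\rho$ onto, and enjoys the same universal property. Applying the property of $(X/M,\pi)$ to $(Z,\rho)$ yields a continuous surjection $f : X/M \to Z$ with $f \circ \pi = \rho$, and applying the property of $(Z,\rho)$ to $(X/M,\pi)$ yields a continuous surjection $h : Z \to X/M$ with $h \circ \rho = \pi$. Then $(h \circ f) \circ \pi = h \circ \rho = \pi$, and since $\pi$ is onto this forces $h \circ f = \mathrm{id}_{X/M}$; symmetrically $f \circ h = \mathrm{id}_Z$ since $\rho$ is onto. Hence $f$ is a homeomorphism intertwining $\pi$ and $\rho$, so $(X/M,\pi) \cong (Z,\rho)$. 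The only genuinely delicate point is the surjectivity bookkeeping — one must either build the reduction to $F(X)$ into the statement or simply exploit that $\pi$ and $\rho$ are onto so the composition cancellations go through; everything else is a direct application of Lemmas \ref{lem3}(2) and \ref{lem9}.
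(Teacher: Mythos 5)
Your proposal is correct and follows essentially the same route as the paper: define $f([x]) = F(x)$, use Lemma \ref{lem3}(2) applied to preimages of separating base elements for well-definedness, and derive uniqueness from the two-sided universal property. Your version is slightly more careful than the paper's on two points the paper leaves implicit --- the explicit verification that $f^{-1}(B) = \pi(F^{-1}(B))$ is open, and the observation that one must either assume $F$ onto or replace $Y$ by $F(X)$ for the surjectivity claim to hold --- but these are refinements of the same argument, not a different one.
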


\begin{proof} Given $X, F$ and $Y$ we seek to define $f: X/M \to Y$ in the natural way, that is $f([x]) = F(x)$. Suppose that $F(x_0) \neq F(x_1)$ for points $x_0$ and $x_1 \in X$. We must check that $[x_0] \neq [x_1]$. There are disjoint $U_0, U_1 \in \mathcal{B}$ such that $[x_0] \in F(U_0)$ and $[x_1] \in F(U_1)$. $V_0 = F^{-1}(U_0)$ and $V_1 = F^{-1}(U_1)$ are disjoint cozero sets in $X$ with $x_0 \in V_0$ and $x_1 \in V_1$, and moreover both $V_0$ and $V_1$ are elements of $M$. Then Lemma \ref{lem3}(2) guarantees that $x_0$ is not equivalent to $x_1$, and our map $f$ is well defined. It is clear that $f$ is continuous and that $f \circ \pi = F$.\\

Now, suppose $(Z, \tau)$ is another pair described by $M$ with the same property as the theorem asserts for $(X/M,\pi)$. That is to say, for any other $(Y,F)$ described by $M$ there is a continuous surjection $f:Z \to Y$ such that $F = f \circ \tau$.\\

In particular, we have a continuous $f: Z \twoheadrightarrow X/M$ such that $f \circ \tau = \pi$, and a continuous $g:X/M \twoheadrightarrow Z$ with $g \circ \pi = \tau$. Then for any $\tau(x) \in Z$,
\[g\bigl(f(\tau(x))\bigr) = g(\pi(x)) = \tau(x),\]
while for $\pi(x) \in X/M$,
\[f\bigl(g(\pi(x))\bigr) = f(\tau(x)) = \pi(x).\]
This shows that $f$ and $g$ are inverse mappings; clearly $f$ is a homeomorphism from $Z$ to $X/M$.

 \end{proof}

We have three useful corollaries, the first of which can be found in ~\cite{Eisworth2006}.

\begin{corollary}\label{cor1} If $M_0 \prec M_1$ are elementary submodels containing $X$, then $X/M_1$ maps continuously onto $X/M_0$. \end{corollary}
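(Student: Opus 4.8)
The plan is to apply the universal property of Theorem \ref{thm1} with $M = M_1$, by exhibiting the pair $(X/M_0, \pi_0)$ — where $\pi_0 : X \twoheadrightarrow X/M_0$ is the projection associated to $M_0$ — as a pair described by $M_1$. Once that is done, Theorem \ref{thm1} immediately furnishes a continuous surjection $f : X/M_1 \to X/M_0$ (in fact with $f \circ \pi_1 = \pi_0$), which is exactly the assertion.

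First I would recall that $X/M_0$ has, by Definition \ref{def1}, a base $\mathcal{B}_0 = \{\pi_0(U) : U \in M_0 \text{ is a cozero set in } X\}$, and that by Lemma \ref{lem9} we have $\pi_0^{-1}\bigl(\pi_0(U)\bigr) = U$ for each such $U$. So to check that $(X/M_0,\pi_0)$ is described by $M_1$ it suffices to check that every cozero set $U$ lying in $M_0$ also lies in $M_1$ and is (still) a cozero set from the point of view needed — but "cozero set in $X$" is an absolute enough notion, and more simply $M_0 \subseteq M_1$ follows from $M_0 \prec M_1$, so each $U \in M_0$ is an element of $M_1$. Hence $\pi_0^{-1}(B)$ is a cozero set in $M_1$ for every $B \in \mathcal{B}_0$, which is precisely the definition of $(X/M_0,\pi_0)$ being described by $M_1$.

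With this in hand, Theorem \ref{thm1} applied to the ambient model $M_1$ and the described pair $(Y,F) = (X/M_0,\pi_0)$ yields a continuous surjection $f : X/M_1 \to X/M_0$ with $f \circ \pi_1 = \pi_0$, completing the proof. The only point that needs a moment's care — and the step I would flag as the main obstacle — is the verification that $M_0 \subseteq M_1$ and that this genuinely gives $\pi_0^{-1}(B) \in M_1$ for $B \in \mathcal{B}_0$; this rests on the elementarity $M_0 \prec M_1$ (so in particular $M_0 \in M_1$ or at least $M_0 \subseteq M_1$, as appropriate to the setup) together with the fact that whether a given set is a cozero set in $X$ is decided inside any elementary submodel containing $X$. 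Everything else is a direct invocation of the universal property, so no further calculation is required.
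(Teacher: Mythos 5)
Your proof is correct and is exactly the intended route: the paper states this as a corollary of Theorem \ref{thm1} (citing [3] for the result itself), and the argument is precisely that $(X/M_0,\pi_0)$ is described by $M_1$ because $M_0\subseteq M_1$ and $\pi_0^{-1}\pi_0(U)=U$ by Lemma \ref{lem9}. Your worry about absoluteness of ``cozero'' is unnecessary --- the definition of ``described by $M$'' only asks that the preimage of each basic set be a cozero set of $X$ that is a member of $M_1$, which is immediate here.
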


\begin{corollary}\label{cor2} If $g \in C(X) \cap M$, then $g$ factors through $\pi$, that is, there is a continuous map $f \in C(X/M)$ with $g = f \circ \pi$. \end{corollary}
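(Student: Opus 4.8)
The plan is to apply Theorem~\ref{thm1} directly, so the only real work is checking that the pair $(\mathbb{R}, g)$ is described by $M$. Concretely, I would proceed as follows. First, recall that $g \in C(X) \cap M$. Since $g \in M$ and $M \prec H_\lambda$, elementarity tells us that all the ``reasonable'' data attached to $g$ lives in $M$: in particular the standard countable base $\mathcal{B}$ of $\mathbb{R}$ consisting of open intervals with rational endpoints is definable and hence $\mathcal{B} \in M$, and since $\mathcal{B}$ is countable each of its members is in $M$ as well.

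The key step is then to observe that for each $B \in \mathcal{B}$ the set $g^{-1}(B)$ is a cozero set of $X$ lying in $M$. It lies in $M$ because $g \in M$, $B \in M$, and the operation $(h, V) \mapsto h^{-1}(V)$ is definable in $H_\lambda$, so $g^{-1}(B) \in M$ by elementarity. It is a cozero set because $B$ is an open $F_\sigma$ subset of $\mathbb{R}$ (indeed any open subset of $\mathbb{R}$ is cozero, being the union of a sequence of open intervals, each of which is trivially cozero), and the preimage of a cozero set under a continuous map is cozero; alternatively one writes $B = \varphi^{-1}(\mathbb{R} \setminus \{0\})$ for a suitable continuous $\varphi$ and notes $g^{-1}(B) = (\varphi \circ g)^{-1}(\mathbb{R}\setminus\{0\})$. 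Hence $(\mathbb{R}, g)$ is described by $M$ with witnessing base $\mathcal{B}$.

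With this in hand, Theorem~\ref{thm1} furnishes a continuous map $f_0 \colon X/M \to \mathbb{R}$ (a priori onto $g(X)$, but we may view it as a map into $\mathbb{R}$) with $g = f_0 \circ \pi$. Since $X/M$ is a continuous image of $X$ by Lemma~\ref{lem3}(1), $X/M$ is Lindel\"of if $X$ is; in any case $f_0(X/M) = g(X)$ is already the relevant range, and composing with the inclusion $g(X) \hookrightarrow \mathbb{R}$ gives the desired $f := f_0 \in C(X/M)$ with $g = f \circ \pi$. This completes the proof.

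I do not anticipate any genuine obstacle: the statement is a straightforward specialization of Theorem~\ref{thm1}. The one point that merits a sentence of care is the verification that $g^{-1}(B) \in M$ --- this is where elementarity of $M$ (not merely $X \in M$ and $g \in M$ as isolated facts) is used, together with the fact that $\mathbb{R}$ has a \emph{countable} base so that its individual basic sets are captured by $M$ without any additional closure assumptions. If one wanted a self-contained argument avoiding Theorem~\ref{thm1}, one could instead define $f([x])$ to be the common value $g(x)$ for $x$ in the class $[x]$ --- well-definedness is immediate since $g \in M$ forces $g$ to be constant on $M$-equivalence classes by Definition~\ref{def1} --- and check continuity against the base $(\pi(U) : U \in M$ cozero$)$ using $f^{-1}(B) = \pi(g^{-1}(B))$ together with Lemma~\ref{lem9}; but routing through Theorem~\ref{thm1} is cleaner.
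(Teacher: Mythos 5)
Your proposal is correct and follows the route the paper intends: the corollary is stated without proof precisely because it is the specialization of Theorem~\ref{thm1} to the pair $(g(X), g)$, which is described by $M$ since $g \in M$ and the rational-interval base of $\mathbb{R}$ lies in $M$, exactly as you verify. Your handling of the surjectivity issue (applying the theorem with target $g(X)$ and composing with the inclusion into $\mathbb{R}$) and your alternative direct argument via well-definedness on equivalence classes are both sound.
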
\end{section}

\begin{corollary}\label{cor9} Suppose $(Z,\tau)$ is a pair described by $M$ and that $Z$ is a continuous preimage of $X/M$. Then $Z$ is homeomorphic to $X/M$.\end{corollary}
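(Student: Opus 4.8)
The plan is to produce continuous maps between $X/M$ and $Z$ going in both directions and to check that they are mutually inverse, which is the same mechanism used for the uniqueness clause in the proof of Theorem~\ref{thm1}.

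First, since $(Z,\tau)$ is described by $M$, Theorem~\ref{thm1} supplies a continuous surjection $f\colon X/M \twoheadrightarrow Z$ with $f\circ\pi=\tau$. Secondly, the hypothesis that $Z$ is a continuous preimage of $X/M$ gives a continuous surjection $h\colon Z\twoheadrightarrow X/M$. The essential point --- and the only one that needs care --- is that this $h$ is compatible with the projections from $X$, that is, $h\circ\tau=\pi$; in the idiom of this paper (compare the diagram in Theorem~\ref{thm1} and the canonical maps of Corollary~\ref{cor1}) this compatibility is part of what ``$Z$ is a continuous preimage of $X/M$'' should mean, and I would make it explicit in the statement.

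Granting $h\circ\tau=\pi$, the rest is a short diagram chase. From $f\circ\pi=\tau$ and $h\circ\tau=\pi$ one gets $h\circ f\circ\pi = h\circ\tau = \pi$ and $f\circ h\circ\tau = f\circ\pi = \tau$; cancelling the surjections $\pi$ and $\tau$ on the right gives $h\circ f=\mathrm{id}_{X/M}$ and $f\circ h=\mathrm{id}_Z$. Thus $f$ is a continuous bijection with continuous inverse $h$, hence a homeomorphism carrying $\pi$ to $\tau$. Equivalently, for any $(Y,G)$ described by $M$ one composes the map $g\colon X/M\to Y$ supplied by Theorem~\ref{thm1} with $h$ to see that $(Z,\tau)$ also has the universal property of that theorem, and then invokes its uniqueness clause.

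I expect the main obstacle to be exactly this hypothesis bookkeeping rather than anything topological: a bare continuous surjection $Z\to X/M$ does not suffice. For instance, taking $X=[0,1]$ and a countable $M$ (so that $X/M\cong[0,1]$), the pair $\bigl([0,1]^2,s\bigr)$ for a space-filling curve $s\colon[0,1]\to[0,1]^2$ is described by $M$, and $[0,1]^2$ maps continuously onto $[0,1]\cong X/M$, yet $[0,1]^2\not\cong X/M$. So the real work is in recording that the witnessing surjection $Z\to X/M$ can be taken to commute with $\tau$ and $\pi$; once that is available the corollary reduces to the cancellation above.
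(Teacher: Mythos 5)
Your proposal follows the same route as the paper's own proof: feed $Z$ into the uniqueness clause of Theorem~\ref{thm1} by composing the given surjection $h\colon Z\twoheadrightarrow X/M$ with the universal maps out of $X/M$ (or, equivalently, run the cancellation argument directly). The difference is that you are more careful than the paper. The paper's proof observes that the composite $Z\to X/M\to Y$ is a continuous surjection and immediately invokes uniqueness, but the uniqueness clause of Theorem~\ref{thm1} requires that surjection to satisfy $F=(f\circ h)\circ\tau$, which is exactly the compatibility $h\circ\tau=\pi$ that you isolate; the paper never verifies it, and it does not follow from the stated hypotheses. Your space-filling curve example shows the issue is genuine: with $X=[0,1]$ and a countable $M$ containing $s$ together with countable bases for $[0,1]$ and $[0,1]^2$, the pair $\bigl([0,1]^2,s\bigr)$ is described by $M$ and $[0,1]^2$ surjects continuously onto $[0,1]=X/M$, yet $[0,1]^2\not\cong[0,1]$. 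So the corollary as literally stated is false, and your added hypothesis (the witnessing surjection commutes with $\tau$ and $\pi$) is needed. Reassuringly, in the one place the corollary is applied, Lemma~\ref{lem5}, the witnessing surjection is $\pi\upharpoonright Y$ and the compatibility $(\pi\upharpoonright Y)\circ(\tau\circ\pi)=\pi$ does hold, so the corollary is only ever used in your corrected form. Granting that hypothesis, your right-cancellation of the surjections $\pi$ and $\tau$ is precisely the computation in the uniqueness part of Theorem~\ref{thm1}, and the argument is complete.
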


\begin{proof} Let $h$ map $Z$ onto $X/M$; if $(Y,F)$ is another pair described by $M$ then there is a continuous surjection $f:X/M \to Y$. So, $h \circ f$ is a continuous surjection $Z \to Y$. Then $Z$ is homeomorphic to $X/M$ by the uniqueness asserted in Theorem \ref{thm1},\end{proof}

\begin{section}{Examples and analysis.}

We now investigate the topological relationship between $X$ and $X/M$. Our first result characterizes when we have $X/M$ homeomorphic to $X$. 

\begin{theorem} $\pi: X \to X/M$ is a homeomorphism if and only if $M$ includes a base for $X$.\end{theorem}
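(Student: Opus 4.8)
The plan is to prove both directions separately, with the backward direction being the more substantive one.

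For the forward direction, suppose $\pi: X \to X/M$ is a homeomorphism. Then $X$ is homeomorphic (via $\pi$) to $X/M$, which by Definition \ref{def1} has a base consisting of sets $\pi(U)$ where $U \in M$ is cozero in $X$. Pulling this base back through $\pi$, and using Lemma \ref{lem9} which tells us $\pi^{-1}\pi(U) = U$, we get that $\{U : U \in M \text{ is cozero in } X\}$ is a base for $X$. Since each such $U$ lies in $M$, this exhibits a base for $X$ contained in $M$, which is what we want.

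For the backward direction, suppose $M$ includes a base $\mathcal{B}$ for $X$; without loss of generality (shrinking if necessary using $T_{3\frac12}$ and the fact that $M \prec H_\lambda$ knows $X$ is $T_{3\frac12}$) I would arrange that $\mathcal{B}$ consists of cozero sets, since each basic open set is a union of cozero sets and $M$ can be taken to contain witnessing cozero sets — alternatively, one observes that a base of arbitrary open sets in $M$ refines to a base of cozero sets in $M$ by elementarity. The key point is then that $\pi$ is injective: if $x_0 \neq x_1$ in $X$, pick disjoint cozero sets (using regularity inside $M$) $U_0, U_1 \in M$ separating them; then Lemma \ref{lem3}(2) gives $x_0 \not\sim x_1$, so $[x_0] \neq [x_1]$. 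Since $\pi$ is always a continuous surjection (Lemma \ref{lem3}(1)), it remains to check $\pi$ is open, or equivalently that $\pi^{-1}$ is continuous. But for any basic cozero $U \in \mathcal{B} \subseteq M$, Lemma \ref{lem9} gives that $\pi(U)$ is open in $X/M$ and $\pi^{-1}\pi(U) = U$; since injectivity gives $\pi(U) = (\pi^{-1})^{-1}(U)$, we conclude $\pi$ maps the base $\mathcal{B}$ to open sets, hence $\pi$ is open. A continuous open bijection is a homeomorphism.

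I expect the main obstacle to be the reduction in the backward direction to the case where the base in $M$ consists of cozero sets, and more precisely checking that injectivity of $\pi$ genuinely needs only that $M$ contains a base — one must be careful that the separating cozero sets can be found \emph{inside} $M$. This follows because if $x_0 \in B_0, x_1 \in B_1$ for disjoint $B_0, B_1 \in \mathcal{B} \subseteq M$ (disjoint basic sets separating the two points exist since $X$ is Hausdorff and $\mathcal{B}$ is a base), then replacing each $B_i$ by a cozero subset is unnecessary if $\mathcal{B}$ already consists of cozero sets; if not, one uses that $M \models$ "$X$ is $T_{3\frac12}$" to find, for each basic set and point, a cozero set trapped between them that $M$ contains. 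Once this is in hand, everything else is a routine application of Lemmas \ref{lem3} and \ref{lem9}.
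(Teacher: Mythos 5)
Your forward direction is essentially identical to the paper's: both pull the canonical base $\{\pi(U): U \in M \text{ cozero}\}$ back through the homeomorphism and invoke Lemma \ref{lem9}. Your backward direction, however, takes a genuinely different route. The paper applies the universal property of Theorem \ref{thm1} to the pair $(X,\mathrm{id}_X)$, which is described by $M$ once the base is taken to consist of cozero sets, and reads off a continuous inverse for $\pi$. You instead verify directly that $\pi$ is an injective, continuous, open map: injectivity from Lemma \ref{lem3}(2) (here you only need one cozero set in $M$ containing $x_0$ but not $x_1$, which the base supplies since $X$ is Hausdorff --- the disjointness you ask for is harmless overkill), and openness from the definition of the topology on $X/M$. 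Your argument is more elementary and self-contained; the paper's is shorter given the machinery already built. Both proofs rest on the same reduction, namely that a base $\mathcal{B}\subseteq M$ may be replaced by a base of cozero sets lying in $M$. The paper asserts this without comment; your attempted justification is the one shaky spot, since "for each basic set and point, find a cozero set trapped between them that $M$ contains" requires elementarity to be applied with the point $x$ as a parameter, and $x$ need not belong to $M$. (What elementarity gives from $B\in M$ alone is a cover of $B$ by cozero sets \emph{in} $M$ as a single object, not that its members lie in $M$.) This is a shared gap rather than a defect of your approach, but if you want your proof to be airtight you should either take the hypothesis to mean that the cozero sets in $M$ form a base, or supply a genuine argument for the reduction.
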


\begin{proof} Suppose first that there is a base $\mathcal{B}$ for $X$ such that $\mathcal{B} \subseteq M$; we may assume that $\mathcal{B}$ consists of cozero sets. Then the pair $(X, \mathrm{id}_X)$ will satisfy the hypotheses on $(Y,F)$ in Theorem \ref{thm1}. It is easy to see that the corresponding $f:X/M \to X$ is a continuous inverse for $\pi: X \to X/M$. \\

Suppose now that $\pi:X \to X/M$ is a homeomorphism. We know that $X/M$ has a base $\mathcal{B} = \bigl \{ \pi(U): U \in M$ is a cozero set in $X \bigr \}$. By Lemma \ref{lem9}, $\pi^{-1}\pi(U) = U$ for every $\pi(U) \in \mathcal{B}$. But $\pi^{-1}$ is a homeomorphism, so $\bigl\{\pi^{-1}\pi(U): U \in M$ is a cozero set$\bigr \} = \{ U: U \in M$ is a cozero set$\} \subseteq M$ should be a base for $X$. \end{proof}

\begin{corollary}\label{cor8} If $M$ is any elementary submodel and $X$ is a space with a countable base $\mathcal{B}$, we may assume that $\mathcal{B} \subseteq M$ and hence $X/M = X$. In particular, $\mathbb{R}/M = \mathbb{R}$ for any elementary submodel $M$. \end{corollary}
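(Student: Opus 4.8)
The plan is to reduce the statement to the theorem immediately preceding it by exhibiting a base for $X$ that actually belongs to $M$. First I would record that ``$X$ has a countable base'' is a first-order property of $X$ in $H_\lambda$; since $X \in M \prec H_\lambda$, elementarity produces a \emph{countable} base $\mathcal{B}_0$ with $\mathcal{B}_0 \in M$. (This is the precise content of the phrase ``we may assume $\mathcal{B} \subseteq M$'': the base we are handed need not lie in $M$, but some countable base does; and in a $T_{3\frac12}$ second countable space one may moreover take its members to be cozero sets, again an elementary property of $X$, although the theorem we are about to invoke does not require this.)

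Next I would invoke the standard fact that a countable set which is an element of an elementary submodel is contained in that submodel. Concretely, since $\mathcal{B}_0$ is countable, elementarity yields a surjection $s : \omega \twoheadrightarrow \mathcal{B}_0$ with $s \in M$; as $\omega \subseteq M$ and $M$ is closed under applying functions that belong to it, each value $s(n)$ lies in $M$, so $\mathcal{B}_0 = \{\, s(n) : n \in \omega \,\} \subseteq M$. Thus $M$ includes a base for $X$.

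The preceding theorem now applies verbatim: since $M$ includes a base for $X$, the projection $\pi : X \to X/M$ is a homeomorphism, and identifying $X$ with its image under $\pi$ gives $X/M = X$. For the last sentence, I would note that $\mathbb{R}$ is definable in $H_\lambda$ without parameters, hence $\mathbb{R} \in M$ for every elementary submodel $M \prec H_\lambda$ with $\lambda$ large enough that $\mathbb{R} \in H_\lambda$; since $\mathbb{R}$ is second countable, the first part yields $\mathbb{R}/M = \mathbb{R}$.

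All of this is essentially bookkeeping; the only step that deserves genuine care is the second one, where one must extract a base that truly belongs to $M$ (equivalently, all of whose members lie in $M$) rather than merely knowing that a countable base exists in the ambient universe. Once that is in hand, the result is a direct citation of the theorem above, so I do not expect any substantive obstacle.
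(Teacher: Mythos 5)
Your proposal is correct and is exactly the argument the paper intends: the corollary is stated as an immediate consequence of the preceding theorem, with the implicit content being precisely your two steps (elementarity gives a countable base $\mathcal{B}_0 \in M$, and a countable element of $M$ is a subset of $M$ via a surjection from $\omega$ lying in $M$). Nothing further is needed.
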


\begin{remark} Observe that if $X_M = X$ then $M$ must include a base for $X$. Hence, any of the conditions for $X_M = X$ given in [8], [15] or [17] are sufficient to guarantee that $X/M = X$. We note an example from [16] that is particularly interesting in view of our later considerations about indestructibly Lindel\"{o}f spaces.
\begin{itemize} \item If $X_M$ is a continuous image of $\{0,1\}^\kappa$ for some $\kappa$, $w(X_M)$ is a member of $M$ and less than the first strongly inaccessible cardinal, then $X_M = X$.\end{itemize}\end{remark}

\begin{remark} There are, however, simple cases where $X/M = X$ and $X_M$ is a proper subset of $X$. $\mathbb{R}/M = \mathbb{R}$ for any elementary submodel $M$ by Corollary \ref{cor8}, but $\mathbb{R}_M$ is countable when $M$ is countable.\end{remark}

\begin{subsection}{Cardinal invariants of $X/M$}

Naturally, all relationships among cardinal invariants preserved by a continuous map hold for $X$ and $X/M$; see [5] for a discussion of these. The most interesting cardinal inequalities for $X/M$ that we have found are provided by the following example. Recall that the character of a point $x \in X$ is denoted $\chi(x,X)$ and defined as the least cardinality of a local base at $x$. The character $\chi(X)$ is $\max\bigl(\sup_{x \in X}\chi(x,X), \aleph_0 \bigr)$. Similarly, the pseudocharacter at $x \in X$ is denoted $\psi(x,X)$ and defined as $\min (|\mathcal{U}|: \mathcal{U}$ is a family of open sets and $\bigcap \mathcal{U} = \{x\})$. The pseudocharacter $\psi(X)$ is $\max\bigl(\sup_{x \in X} \psi(x,X), \aleph_0\bigr)$. Clearly, $\psi(X) \leq \chi(X)$ for $T_1$ spaces $X$. The idea behind this example is that pointwise invariants such as $\psi$ and $\chi$ need not be preserved by $\pi:X \to X/M$.

\begin{example}\label{ex1} Consider $D(2^{\beth_{\omega_1}})$, the discrete space of size $2^{\beth_{\omega_1}}$. Then there is a model $M$ of size $\beth_{\omega_1}$ such that $\chi\bigl(D(2^{\beth_{\omega_1}})\bigr) < \chi\bigl(D(2^{\beth_{\omega_1}})/M\bigr)$ and $\psi\bigl(D(2^{\beth_{\omega_1}})\bigr) < \psi\bigl(D(2^{\beth_{\omega_1}})/M\bigr)$. \end{example}

Note that $\chi\bigl(D(2^{\beth_{\omega_1}})\bigr) = \psi\bigl(D(2^{\beth_{\omega_1}})\bigr) = \aleph_0$.  For a model $M$ of size $\beth_{\omega_1}$, $D(2^{\beth_{\omega_1}})/M$ has a base of size $\beth_{\omega_1}$, so if $\chi\bigl(D(2^{\beth_{\omega_1}})/M\bigr) = \aleph_0$ or $\psi\bigl(D(2^{\beth_{\omega_1}})\bigr) = \aleph_0$ then we must have $|D(2^{\beth_{\omega_1}})/M| = \beth_{\omega_1}^{\aleph_0} = \beth_{\omega_1}$. Hence, it suffices to find a model $M$ of size $\beth_{\omega_1}$ such that $\beth_{\omega_1}< |D(2^{\beth_{\omega_1}})/M|$.\\

Regard $D(2^{\beth_{\omega_1}})$ as the binary splitting tree $2^{\beth_{\omega_1}}$ with the box topology. For each $\alpha < \beth_{\omega_1}$, there are $2^\alpha \leq \beth_{\omega_1}$ branches through the first $\beth_{\omega_1}$ levels of the tree $D(2^{\beth_{\omega_1}})$. List these branches as $(B^\alpha_\beta)_{\beta < 2^\alpha}$. For each $B^\alpha_\beta$, define a function $f^\alpha_\beta:D(2^{\beth_{\omega_1}}) \to \mathbb{R}$ by $f^\alpha_\beta(x) = 0$ if $x \upharpoonright \alpha = B^\alpha_\beta$ and $f(x) = 1$ otherwise. Note $f^\alpha_\beta$ is continuous. Furthermore, we may consider a model $M$ with size $\beth_{\omega_1}$ such that $\bigl( \bigcup_{\alpha < \beth_{\omega_1}, \beta < 2^\alpha} f^\alpha_\beta\bigr) \subseteq M$. Two points in $D(2^{\beth_{\omega_1}})$ are $M$-equivalent exactly when they define the same branch through the tree, hence $D(2^{\beth_{\omega_1}})/M = D(2^{\beth_{\omega_1}})$ as sets (note they will not be homeomorphic, since $w(D(2^{\beth_{\omega_1}})/M) = \kappa$). This shows that $\beth_{\omega_1} < |D(2^{\beth_{\omega_1}})/M|$ for $|M| = \beth_{\omega_1}$, so $D(2^{\beth_{\omega_1}})/M$ cannot have countable pseudocharacter.

\begin{subsection}{Compactness and connectedness of $X/M$.}

Junqueira has shown in [6] that if $X_M$ is compact, then $X$ itself must be compact. Moreover, Theorem 1.1 in [8] gives several cases in which $X_M = X$ for compact $X_M$. We will see that the situation is entirely different for $X/M$.

\begin{theorem}\label{thm2} Suppose $\gamma$ is an ordinal and $\gamma \in M$. Then, for any $\kappa, \lambda$ and a family $\mathcal{M}$ of models closed and cofinal in $\{M \prec H_\lambda: |M| = \kappa \}$, we have that $X/M = (M \cap \gamma)+1$ for any $M \in \mathcal{M}$. \end{theorem}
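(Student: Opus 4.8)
The space in question is $X = \gamma+1$ with the order topology: it is compact, zero-dimensional and $T_{3\frac12}$, and $X \in M$ since $\gamma \in M$. The plan is to compute the $M$-equivalence classes explicitly, identify $X/M$ as a set with $M \cap (\gamma+1)$, and then pin down its topology using Theorem \ref{thm1} together with the compactness of $X/M$. For $x \in X$ write $x^{+} = \min\{\, w \in M \cap (\gamma+1) : w \ge x \,\}$, which is well defined since $\gamma \in M \cap (\gamma+1)$. I will show that $x \sim y$ if and only if $x^{+} = y^{+}$, so that, as a set, $X/M$ is in bijection with $\{\, x^{+} : x \in X \,\} = M \cap (\gamma+1) = (M\cap\gamma)\cup\{\gamma\}$, whose order type is $(\mathrm{ot}(M\cap\gamma))+1$.

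The computation of $\sim$ has two halves. First, if $\alpha \in M$ is an ordinal with $x \le \alpha < y$, then $[0,\alpha]$ is a clopen — hence cozero — subset of $X$ lying in $M$, and it contains $x$ but not $y$; so Lemma \ref{lem3}(2) gives $x \not\sim y$. Thus $x \sim y$ (say $x \le y$) forces $M \cap [x,y) = \emptyset$, which in turn forces $x^{+} = y^{+}$. For the converse I claim that $M \cap [x,x^{+}) = \emptyset$ implies $x \sim x^{+}$, which together with transitivity finishes the class computation. If $x \in M$ this is trivial, so assume $x \notin M$; then $x^{+} > x$, and in fact $x^{+}$ must be a limit ordinal, since if $x^{+} = \eta+1$ then $\eta \in M$ and $x \le \eta < x^{+}$, contradicting either the minimality of $x^{+}$ or $x \notin M$. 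Now fix $f \in C(X) \cap M$. Continuity of $f$ at the limit point $x^{+}$, combined with elementarity, provides for each rational $\varepsilon > 0$ an ordinal $\beta_{0} \in M$ with $\beta_{0} < x^{+}$ and $|f(\beta)-f(x^{+})| < \varepsilon$ for all $\beta \in (\beta_{0},x^{+}]$; minimality of $x^{+}$ forces $\beta_{0} < x$, so $x \in (\beta_{0},x^{+}]$ and hence $|f(x)-f(x^{+})| < \varepsilon$. Letting $\varepsilon \to 0$ gives $f(x) = f(x^{+})$, and since $f$ was arbitrary, $x \sim x^{+}$.

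For the topology, let $Z = M \cap (\gamma+1)$ carry the topology generated by the traces $[0,w]\cap M$ and $(w_{1},w_{2}]\cap M$ with $w,w_{1},w_{2} \in M\cap(\gamma+1)$; since $Z$ is well ordered these sets are clopen and form a base, so this is just the order topology on $Z$, under which $Z$ is homeomorphic to the ordinal $(M\cap\gamma)+1$. Let $q : X \to Z$ be $q(x) = x^{+}$. Then $q^{-1}([0,w]\cap M) = [0,w]$ and $q^{-1}((w_{1},w_{2}]\cap M) = (w_{1},w_{2}]$ are clopen subsets of $X$ lying in $M$, so the pair $(Z,q)$ is described by $M$; Theorem \ref{thm1} then gives a continuous surjection $f : X/M \to Z$ with $f\circ\pi = q$. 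By the class computation the fibres of $q$ are exactly the $\sim$-classes, so $f$ is a bijection. Finally $X/M$ is compact by Lemma \ref{lem3}(1), being a continuous image of the compact space $\gamma+1$, and $Z$ is Hausdorff, so the continuous bijection $f$ is a homeomorphism and $X/M = (M\cap\gamma)+1$.

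I expect the converse half of the class computation — that $M\cap[x,x^{+})=\emptyset$ forces $x\sim x^{+}$ — to be the crux. The obstacle is that neither $x$ nor the ordinals immediately below $x^{+}$ need lie in $M$, so one cannot simply name in $M$ a function or a neighbourhood separating $x$ from $x^{+}$; the argument must instead extract the witnessing ordinal $\beta_{0}$ from $M$ via elementarity applied to the continuity at $x^{+}$ of an arbitrary $f \in C(X)\cap M$, which is why it first matters to establish that $x^{+}$ is a limit ordinal. A smaller point needing care is that the order topology on $M\cap(\gamma+1)$ can be strictly finer than the subspace topology it inherits from $\gamma+1$, so it is cleanest to verify the hypotheses of Theorem \ref{thm1} against the order topology directly. (The closed-and-cofinal hypothesis on $\mathcal{M}$ is not actually used in the argument above; it presumably serves only to guarantee that $M\cap\gamma$ is genuinely an ordinal, so that the right-hand side $(M\cap\gamma)+1$ is literally an ordinal and not merely order-isomorphic to one.)
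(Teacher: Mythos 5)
Your proposal is correct, and it takes a genuinely different route from the paper's. The paper works with $X=\gamma$ rather than $\gamma+1$, uses the closed-and-cofinal hypothesis to restrict to models satisfying auxiliary closure conditions (that the characteristic functions of the rays $[0,\alpha]$ lie in $M$ for $\alpha\in M$, and that $\sup U\in M$ for every cozero $U\in M$), asserts that $M\cap\gamma$ is an initial segment of $\gamma$ so that points of $M\cap\gamma$ have singleton classes while everything above $M\cap\gamma$ forms one class, and then identifies the topology by checking directly that open rays form a base. You instead compute the entire equivalence relation as the fibre map $x\mapsto x^{+}$; the elementarity-plus-continuity argument at the limit ordinal $x^{+}$ is exactly the right replacement for the paper's ad hoc closure assumptions and needs nothing beyond $\gamma\in M\prec H_\lambda$, and you then get the topology for free from Theorem \ref{thm1} together with the compact-to-Hausdorff trick. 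Your version is more robust: when $M\cap\gamma$ is not an initial segment (e.g.\ $\gamma=\omega_2$ with $M$ countable), the paper's singleton claim fails, whereas your fibre description remains valid and still yields a well-ordered compact quotient of order type $\mathrm{ot}(M\cap\gamma)+1$, which is the only sensible reading of the conclusion there. Two small caveats. The paper's proof concerns $\gamma/M$, not $(\gamma+1)/M$; the two agree in order type in general, but your use of $\gamma+1$ is what makes $X/M$ compact, and for $\gamma/M$ one would close the argument by noting that the points lying above every element of $M\cap\gamma$ (those with $x^{+}$ undefined) form a single class, e.g.\ because a continuous real-valued function on an ordinal of uncountable cofinality is eventually constant with a witness in $M$; in degenerate cases such as $\gamma=\omega$, where $M\cap\gamma$ is cofinal in $\gamma$, the literal statement fails for $\gamma/M$ but holds for your $(\gamma+1)/M$. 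Also, your parenthetical about topologies is backwards: for a subset of an ordinal the subspace topology is always at least as fine as the induced order topology, not the reverse; this does not affect your proof, which correctly verifies the hypotheses of Theorem \ref{thm1} against the order topology itself.
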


\begin{proof} The key point in this argument is that $M \cap \gamma$ is an initial segment of $\gamma$. Also assume that $f_\alpha, g_\alpha \in M$ whenever $\alpha \in M$, where $f_\alpha: \gamma \to \mathbb{R}$ is given by $f_\alpha(\beta) = \begin{cases} 0 &\mbox{if } \beta \leq \alpha \\ 
1 & \mbox{if } \alpha < \beta. \end{cases}$ and $g_\alpha$ is given by $\begin{cases} 1 &\mbox{if } \beta \leq \alpha \\ 
0 & \mbox{if } \alpha < \beta. \end{cases}$. Finally, assume that for any cozero set $U \in M$, the ordinal $\sup U$ is a member of $M$. Consider $\gamma /M$, and claim that for any $\alpha < \gamma \cap M$, the class $[\alpha]$ is a singleton. Suppose to the contrary that $\alpha_0 \sim \alpha_1$, where we may assume that $\alpha_0 \leq \alpha_1$. Then we should have $f(\alpha_0) = f(\alpha_1)$ for all $f \in M$, which contradicts our assumption that $f_{\alpha_0} \in M$. \\

We have assumed that the operation $U \mapsto \sup U$ is in $M$ for $U$ cozero in $M$. Hence, we must have $U \subseteq \gamma \cap M$ for every $U \in M$. But, this implies that for all $U \in M$ and all $\alpha > M \cap \gamma$, $\alpha \notin U$. Hence, if $\alpha_0, \alpha_1 > \gamma \cap M$, $\alpha_0$ is vacuously equivalent to $\alpha_1$. Therefore, $\pi\bigl( (\gamma \cap M, \gamma) \bigr)$ is a single equivalence class in $\gamma /M$.\\

We can now unambigiously define a wellordering on $\gamma /M$ by $[\alpha_0] \leq [\alpha_1]$ if and only if $\alpha_0 \leq \alpha_1$. This is a wellordering since $\leq$ is a wellordering. We will show that the rays form a basis for $\gamma/M$. Suppose that $\alpha < \gamma \cap M$. Then, $\pi^{-1}\bigl( (0, [\alpha]) \bigr) = (0, \alpha)$ and $\pi^{-1}\bigl( ([\alpha], [\gamma \cap M]) \bigr) = (\alpha, \gamma)$. Both $(0,\alpha)$ and $(\alpha, \gamma)$ are cozero members of $M$ since we assumed that $f_\alpha$ and $g_\alpha$ are in $M$. Thus, $(0, [\alpha])$ and $([\alpha], [\gamma \cap M])$ are open in $\gamma /M$. The case when $\alpha \geq \gamma \cap M$ is obvious, so we have shown that rays are open in $\gamma /M$. Moreover, we have shown that the image of a ray in $\gamma$ is an open ray in $\gamma/M$, so the open rays of $\gamma/M$ form a basis for $\gamma/M$. \\

To conclude, we have shown that $\gamma /M$ is a wellordered space with the order topology, which consists of singleton classes $[\alpha]$ for $\alpha < \gamma \cap M$ and a unique class $[\gamma \cap M]$ for all $\alpha \geq \gamma \cap M$. It is now clear that $\gamma/M$ is in fact isomorphic to $\gamma \cap M +1$, where we identify the element $\gamma \cap M$ with $[\gamma \cap M]$. Corollary \ref{cor1} then gives that any model $M'$ with $M \prec M'$ will have $\gamma /(M')$ homeomorphic to $(M' \cap \gamma )+1$\end{proof}

\begin{remark}\label{rem1} This shows that for any ordinal $\gamma$ and a club of models $\mathcal{M}$, $\gamma /M$ is compact for each $M \in \mathcal{M}$. We refer to a club in the sense of a family which is cofinal and closed under increasing $\kappa$-sequences with respect to inclusion in $(M \prec H_\lambda: |M| = \kappa)$ for some fixed $\kappa$. Recall that the Lindel\"{o}f number $L(X)$ of a space $X$ is defined to be the least cardinal $\kappa$ such that every open cover of $X$ has a subcover of size $\kappa$ - so, for example, the Lindel\"{o}f property is equivalent to the statement $L(X) = \aleph_0$. Since for a regular ordinal $\gamma$ we have $L(\gamma) = \gamma$, Theorem \ref{thm2} demonstrates that there are spaces $X$ of arbitrarily large Lindel\"{o}f number such that $X/M$ is compact. \end{remark}

We now generalize an unpublished result of Todd Eisworth to get a contrasting statement.

\begin{proposition} The following are equivalent for a Tychonoff space $X$. \begin{enumerate} \item $X$ is pseudocompact, \item $X/M$ is pseudocompact whenever $M$ is an elementary submodel containing $X$, \item there is an elementary submodel $M$ containing $X$ for which $X/M$ is pseudocompact. \end{enumerate} \end{proposition}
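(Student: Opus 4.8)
The plan is to establish the cycle $(1) \Rightarrow (2) \Rightarrow (3) \Rightarrow (1)$, with only the last implication requiring any work.

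For $(1) \Rightarrow (2)$ I would recall from Lemma~\ref{lem3}(1) that $X/M$ is a continuous image of $X$, and use the standard fact that pseudocompactness passes to continuous images: if $h \colon X \twoheadrightarrow Y$ is a continuous surjection and $\varphi \in C(Y)$, then $\varphi \circ h \in C(X)$ is bounded, hence $\varphi$ is bounded on $Y = h(X)$. For $(2) \Rightarrow (3)$ it is enough to note that elementary submodels $M \prec H_\lambda$ with $X \in M$ exist (L\"{o}wenheim--Skolem), so (2) immediately supplies such an $M$ with $X/M$ pseudocompact.

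The content is in $(3) \Rightarrow (1)$, which I would prove by contraposition. Suppose $X$ is not pseudocompact and let $M \prec H_\lambda$ with $X \in M$ be arbitrary; I must produce an unbounded member of $C(X/M)$. The assertion ``there is an unbounded $g \in C(X)$'' is true in $H_\lambda$ (for $\lambda$ large enough that $H_\lambda$ correctly computes $C(X)$ and unboundedness) and its only parameters are $X$ together with the definable objects $\mathbb{R}$ and $\mathbb{N}$; since $X \in M \prec H_\lambda$, elementarity reflects this and yields an unbounded $g \in C(X) \cap M$. Now Corollary~\ref{cor2} applies to this $g$ — that corollary is stated for all of $C(X) \cap M$, not merely the bounded functions — giving a continuous $f \colon X/M \to \mathbb{R}$ with $g = f \circ \pi$. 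Since $\pi$ is onto, $f(X/M) = g(X)$ is unbounded, so $f$ witnesses that $X/M$ is not pseudocompact. As $M$ was arbitrary this refutes (3), completing the cycle.

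The one delicate point is the reflection step in $(3) \Rightarrow (1)$: one must phrase non-pseudocompactness as a first-order property of $X$ over $H_\lambda$ whose parameters already lie in $M$, so that an unbounded continuous function can genuinely be pulled into $M$. Once that is in place, Corollary~\ref{cor2} (applied to a possibly unbounded function) does the rest, and I anticipate no further obstacle.
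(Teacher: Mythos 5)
Your proposal is correct and follows essentially the same route as the paper: continuous images for $(1)\Rightarrow(2)$, triviality for $(2)\Rightarrow(3)$, and for $(3)\Rightarrow(1)$ reflecting an unbounded continuous function into $M$ by elementarity and factoring it through $\pi$ (the paper does this implicitly where you invoke Corollary~\ref{cor2} explicitly). The extra care you take about phrasing non-pseudocompactness over $H_\lambda$ and about Corollary~\ref{cor2} applying to unbounded functions is sound but does not change the argument.
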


\begin{proof} $(1)$ implies $(2)$ as continuous images of pseudocompact spaces are pseudocompact. $(2)$ implies $(3)$ trivially. To see that $(3)$ implies $(1)$, assume $X$ is not pseudocompact. Then there is a continuous function from $X$ to $\mathbb{R}$ with unbounded range. If $M$ is any submodel containing $X$, then $M$ is going to contain such a continuous function by elementarity. But this function then induces a continuous map from $X/M$ to $\mathbb{R}$ and it's easy to check the induced map has unbounded range as well. Thus $X/M$ is not pseudocompact. \end{proof}

\begin{lemma}[20] A space is realcompact and pseudocompact if and only if it is compact \end{lemma}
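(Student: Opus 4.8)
The plan is to invoke the standard external characterization of realcompactness --- a Tychonoff space is realcompact if and only if it is homeomorphic to a closed subspace of some power $\mathbb{R}^\kappa$ of the real line --- after which both directions are short.

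First I would dispose of the easy implication. If $X$ is compact, then every member of $C(X)$ has compact, hence bounded, range, so $X$ is pseudocompact; and the evaluation embedding $e: X \hookrightarrow [0,1]^{C^*(X)} \subseteq \mathbb{R}^{C^*(X)}$ of Definition \ref{def2} has compact, hence closed, image, exhibiting $X$ as a closed subspace of a power of $\mathbb{R}$, so $X$ is realcompact.

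For the substantive direction, suppose $X$ is realcompact and pseudocompact. I would use the characterization to identify $X$ with a closed subspace of $\mathbb{R}^\kappa$, writing $p_\xi:\mathbb{R}^\kappa \to \mathbb{R}$ for the coordinate projections. For each $\xi < \kappa$ the map $p_\xi \rest X$ is continuous and real-valued, so pseudocompactness bounds it: $p_\xi(X) \subseteq [a_\xi,b_\xi]$ for suitable reals $a_\xi \leq b_\xi$. Hence $X$ sits inside the product $K = \prod_{\xi<\kappa}[a_\xi,b_\xi]$, which is compact by Tychonoff's theorem; being closed in $\mathbb{R}^\kappa$, $X$ is closed in the subspace $K$, and a closed subspace of a compact space is compact, so $X$ is compact.

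I do not expect a genuine obstacle here. The one substantive input is the closed-subspace characterization of realcompactness, which is classical and may simply be quoted. Alternatively one could run the argument through the identities $X = \upsilon X$ (realcompactness) and $\upsilon X = \beta X$ (pseudocompactness), concluding that $X = \beta X$ is compact; but that route relies on the same body of facts and is no shorter.
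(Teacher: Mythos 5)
Your proof is correct. The paper offers no argument for this lemma at all --- it is simply quoted from Weir's book [20] --- so there is nothing to compare against; your route through the closed-subspace-of-$\mathbb{R}^\kappa$ characterization of realcompactness, bounding each coordinate projection by pseudocompactness and closing off inside the resulting compact box, is the standard proof and every step checks out.
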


We now have an interesting counterpoint to Remark \ref{rem1}.

\begin{theorem} Let $X$ be realcompact. Then $X$ is compact if and only if $X/M$ is pseudocompact for some elementary submodel $M$ containing $X$. \end{theorem}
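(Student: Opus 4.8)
The plan is to prove both directions by combining the two immediately preceding results. The forward direction is easy: if $X$ is compact, then $X$ is in particular pseudocompact, so by the Proposition (the three-way equivalence for pseudocompactness) $X/M$ is pseudocompact for \emph{every} elementary submodel $M$ containing $X$, and in particular for some such $M$. No hypothesis of realcompactness is needed here.

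For the reverse direction, suppose $X$ is realcompact and that $X/M$ is pseudocompact for some elementary submodel $M$ containing $X$. By the Proposition, statement $(3)$ implies statement $(1)$, so $X$ is pseudocompact. Now $X$ is both realcompact (by hypothesis) and pseudocompact (just derived), so the cited Lemma ``a space is realcompact and pseudocompact if and only if it is compact'' yields immediately that $X$ is compact.

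So the argument is essentially a two-line deduction: the first half is a direct appeal to the Proposition, and the second half chains the Proposition with the realcompact-plus-pseudocompact-implies-compact lemma. I do not anticipate any real obstacle; the only thing to be careful about is stating clearly which implication of the Proposition is being used in each direction, and noting explicitly that realcompactness plays no role in the ``only if'' direction but is essential in the ``if'' direction (since without it, Remark~\ref{rem1} shows that $X/M$ can be compact — hence pseudocompact — for non-compact, indeed non-Lindel\"{o}f, $X$). It may be worth remarking that this theorem is a genuine counterpoint to Remark~\ref{rem1}: for the badly non-compact ordinal spaces there, $X$ fails to be realcompact, which is precisely why $X/M$ can collapse to a compact space.
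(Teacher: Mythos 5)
Your proposal is correct and is exactly the argument the paper intends: the theorem is stated immediately after the Proposition and the realcompact-plus-pseudocompact lemma precisely so that it follows by chaining them, which is what you do. No gaps.
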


Unlike compactness, the connectedness of $X$ is determined very easily from the connectedness of $X/M$.

\begin{theorem} The following are equivalent for a Tychonoff space $X$. \begin{enumerate} \item $X$ is connected, \item $X/M$ is connected whenever $M$ is an elementary submodel containing $X$, \item there is an elementary submodel $M$ containing $X$ for which $X/M$ is connected. \end{enumerate} \end{theorem}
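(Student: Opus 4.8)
The plan is to prove the cycle $(1)\Rightarrow(2)\Rightarrow(3)\Rightarrow(1)$, mirroring the structure of the pseudocompactness proposition. The implication $(1)\Rightarrow(2)$ is immediate: by Lemma \ref{lem3}(1), $X/M$ is a continuous image of $X$, and continuous images of connected spaces are connected. The implication $(2)\Rightarrow(3)$ is trivial since there is always at least one elementary submodel $M\prec H_\lambda$ with $X\in M$.

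The substance is in $(3)\Rightarrow(1)$, which I would prove by contraposition. Suppose $X$ is disconnected, so there is a continuous surjection $g\colon X\to\{0,1\}$, where $\{0,1\}$ carries the discrete topology. The point is that this $g$ — or rather the clopen partition it witnesses — is the sort of object $M$ must see. By elementarity, since $X\in M$ and $X$ is disconnected (a statement with parameter $X$), $M$ contains a continuous surjection $g'\colon X\to\{0,1\}$; equivalently $M$ contains a proper nonempty clopen set $C\subseteq X$. Now $C$ is a cozero set (in a Tychonoff space every clopen set is a cozero set, as the characteristic function is continuous) and $C\in M$, and likewise $X\setminus C$ is a cozero set in $M$. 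By Lemma \ref{lem9}, $\pi(C)$ and $\pi(X\setminus C)$ are open in $X/M$ with $\pi^{-1}\pi(C)=C$ and $\pi^{-1}\pi(X\setminus C)=X\setminus C$; in particular these two open sets are disjoint, nonempty, and cover $X/M$. Hence $X/M$ is disconnected, contradicting $(3)$.

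Alternatively, and perhaps more cleanly, I would invoke Corollary \ref{cor2}: the characteristic function $\chi_C\in C(X)\cap M$ (once we know $C\in M$) factors as $\chi_C=f\circ\pi$ for some continuous $f\in C(X/M)$, and since $\chi_C$ is surjective onto $\{0,1\}$ so is $f$, witnessing disconnectedness of $X/M$ directly. Either route works; the only genuine content beyond bookkeeping is the elementarity step producing a clopen subset of $X$ inside $M$, together with the standard fact that clopen sets in Tychonoff spaces are cozero sets.

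The main obstacle — really the only place care is needed — is justifying that $M$ contains a proper nonempty clopen subset of $X$. One must phrase disconnectedness as a first-order property of the structure $H_\lambda$ with parameter $X$: the statement ``there exists a set $C$ such that $C$ is open in $X$, $X\setminus C$ is open in $X$, $C\neq\emptyset$, and $C\neq X$'' holds in $H_\lambda$, so by elementarity it holds in $M$, giving the desired $C\in M$. After that the argument is the short chain of citations above, and no delicate calculation remains.
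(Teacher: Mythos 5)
Your proposal is correct and, in its second route via Corollary \ref{cor2}, is essentially identical to the paper's proof: the paper also takes a continuous surjection $g\colon X\to\{0,1\}$, places it in $M$ by elementarity, and factors it through $\pi$ to disconnect $X/M$. Your care in spelling out the elementarity step is actually an improvement, since the paper's phrasing ``$g\in M$ for some $M$'' glosses over the fact that the witness must be found inside the \emph{given} $M$ from condition $(3)$.
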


\begin{proof} $(1)$ implies $(2)$ since continuous images of connected spaces are connected. $(2)$ implies $(3)$ is trivial. To see $(3)$ implies $(1)$, suppose that $X$ is disconnected. Then there is a continuous $g: X \to \mathbb{R}$ such that $g(X) = \{0,1\}$. Hence, $g \in M$ for some $M$. But then Corollary \ref{cor2} gives a continuous $g: X/M \to \mathbb{R}$ such that $g(X/M) = \{0,1\}$, which contradicts the connectedness of $X/M$. \end{proof} \end{subsection}

\end{subsection}

\begin{subsection}{X/M as a subspace.}

We will say that $Y \subseteq X$ is a \textbf{weak subspace} if $Y$ has a coarser topology than the subspace topology induced from $X$.

\begin{lemma}\label{lem5} For any elementary submodel $M$, $X/M$ is homeomorphic to a weak subspace of $X$. \end{lemma}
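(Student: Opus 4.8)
The plan is to exhibit an explicit set-theoretic injection from $X/M$ into $X$ and then check that the topology of $X/M$ is coarser than the subspace topology this injection induces. First I would pick, for each equivalence class $[x] \in X/M$, a representative point in $X$; that is, choose a selector $s: X/M \to X$ with $\pi \circ s = \mathrm{id}_{X/M}$. Then $s$ is injective (if $s([x]) = s([y])$ then $[x] = \pi(s([x])) = \pi(s([y])) = [y]$), so $Y := s(X/M)$ is a subset of $X$ in bijection with $X/M$ via $s$. It remains to show that, transporting the topology of $X/M$ to $Y$ through this bijection, we get a topology coarser than the subspace topology $Y$ inherits from $X$ — equivalently, that $s: X/M \to X$ is continuous. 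This is where Theorem \ref{thm1} (or just the defining base of $X/M$) does the work: a subbasic open set of $X/M$ has the form $\pi(U)$ for a cozero $U \in M$, and by Lemma \ref{lem9} we have $\pi^{-1}\pi(U) = U$, so $s^{-1}(U) = s^{-1}(\pi^{-1}\pi(U)) \cap \text{(anything)}$; more directly, $s^{-1}(U) = \{[x] : s([x]) \in U\} = \{[x] : \pi(s([x])) \in \pi(U)\} = \pi(U)$ by Lemma \ref{lem10}, since membership in a cozero set of $M$ is constant on each class. Hence $s$ is continuous, and the subspace topology on $Y$ from $X$ refines the copy of the $X/M$-topology, which is exactly the weak subspace condition.

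In slightly more detail on the key continuity computation: fix a cozero set $U \in M$ and a class $[x]$. By Lemma \ref{lem10}, either $[x] \subseteq U$ or $[x] \cap U = \emptyset$. Since $s([x]) \in [x]$, we get $s([x]) \in U$ if and only if $[x] \subseteq U$ if and only if $[x] \in \pi(U)$ (using Lemma \ref{lem9} for the last equivalence, or directly that $\pi(U) = \{[x] : [x] \subseteq U\}$ when $U$ is cozero in $M$). Therefore $s^{-1}(U) = \pi(U)$, which is open in $X/M$ by Lemma \ref{lem9}. Since the sets $\pi(U)$ for $U \in M$ cozero form a base for $X/M$, and the sets $U \cap Y$ for such $U$ are open in the subspace $Y$, every basic open set of $X/M$ pulls back (through the bijection $s$) to an open subset of $Y$; so the $X/M$-topology is coarser than the subspace topology on $Y$.

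The only real subtlety is the choice of selector $s$, which uses the axiom of choice — this is harmless and standard, so I would simply remark that we fix a transversal for $\sim$. I would also note that $Y = s(X/M)$ need not be closed, dense, or otherwise well-behaved in $X$; all we claim, and all we need, is the inclusion of topologies. One could alternatively phrase this via Definition \ref{def2}: $X/M = F \circ e(X) \subseteq [0,1]^{C^*(X) \cap M}$, and one picks for each fiber of $F \circ e$ a point of $e(X)$, pulling back along $e$ to a subset of $X$; but the $\sim$-transversal formulation is cleaner and self-contained given the lemmas already proved.

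The main obstacle I anticipate is essentially bookkeeping: making sure the direction of the topology inequality is the intended one (the $X/M$-topology should be the \emph{coarser} one on $Y$, matching the definition of weak subspace), and being careful that ``coarser topology than the subspace topology'' is read as a statement about the copy of $X/M$ sitting on the underlying set $Y$. There is no deep point here — the content is entirely contained in Lemmas \ref{lem9} and \ref{lem10} — so the proof should be short.
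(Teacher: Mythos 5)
Your proof is correct, and it builds the same object as the paper: a transversal $Y$ for the $M$-equivalence relation, carrying (what turns out to be) the topology generated by $\{U \cap Y : U \in M \text{ cozero}\}$. Where you differ is in how the homeomorphism with $X/M$ is certified. The paper declares that topology on $Y$, observes that $(Y, \tau\circ\pi)$ is described by $M$, and invokes the universal property (Theorem \ref{thm1} via Corollary \ref{cor9}); you instead verify directly that $s(\pi(U)) = U \cap Y$ using Lemmas \ref{lem9} and \ref{lem10}, which simultaneously shows that the transported $X/M$-topology has base $\{U\cap Y\}$ and that this is coarser than the subspace topology. Your route is more elementary and self-contained (it does not need the universal-property machinery at all); the paper's route is shorter given that machinery and illustrates how the ``described by $M$'' formalism is meant to be used. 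One small caution: your parenthetical ``equivalently, that $s : X/M \to X$ is continuous'' is not the right equivalence --- continuity of $s$ into $X$ would say the subspace topology is \emph{coarser} than the transported one, which is the wrong direction and is false in general (you only check preimages of cozero sets in $M$, which need not form a base for $X$). What you actually need, and what your computation $s(\pi(U)) = U\cap Y$ delivers, is that $s$ carries basic open sets of $X/M$ to relatively open subsets of $Y$ (equivalently, that $\pi\restriction Y$ is continuous for the subspace topology), so the slip does not affect the argument you in fact run.
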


\begin{proof} Construct a subset $Y$ of $X$ by choosing a single point from each $M$-equivalence class. Endow $Y$ with the topology generated by $(U \cap Y: U \in M$ is a cozero set$)$. Clearly, $Y$ is a weak subspace of $X$. If $\pi: X \twoheadrightarrow X/M$ is the projection, then $\pi \upharpoonright Y: Y \twoheadrightarrow X/M$ is a continuous mapping from $Y$ onto $X/M$. Consider the map $\tau: X/M \to Y$ given by assigning to $\pi(x) \in X/M$ the unique point in $[x] \cap Y$, where $[x]$ is the $M$-equivalence class of $x$ viewed as a subset of $X$. For a basic open set $U \cap Y$, $U \in M$ we must have that $\pi^{-1}\tau^{-1}(U \cap Y) = U$. Hence, $(Y, \tau \circ \pi)$ is described by $M$ in the sense of Theorem \ref{thm1} and $Y$ is homeomorphic to $X/M$ by Corollary \ref{cor9}. \end{proof}

\end{subsection}

\end{section}

\begin{section}{Applications of $X/M$.}

We can now work toward answering a question from [19]. In order to motivate the problem, we review a forcing procedure. Suppose we have a supercompact cardinal $\kappa$ and consider the poset $\mathrm{Lv}(\kappa, \omega_1)$ used to L\'{e}vy collapse $\kappa$ to $\omega_2$. By supercompactness of $\kappa$, take an elementary embedding $j: V \to M$ such that $\kappa < j(\kappa)$. We can then transfer a filter $G$ which is $\mathrm{Lv}(\kappa, \omega_1)$ generic over $V$ to get a filter $G^*$ which is $j(\mathrm{Lv}(\kappa, \omega_1))$ generic over $M$ such that $j(p) \in G^*$ whenever $p \in G$. This allows us to extend $j$ to an elementary embedding from $V[G]$ to $M[G^*]$. For an example of this technique, see the proof of Theorem 4.16 in [18].  \\

Observe that there is a homeomorphic copy of $X$ in $M[G^*]$ given by $j"X = \{j(x): x \in X\}$. This is also a subset of $j(X)$, so we can ask whether the subspace topology $\mathcal{S}$ that $j"X$ inherits from $j(X)$ is the same as the topology $\mathcal{T}$ that $j"X$ gets from $X$. This is interesting in view of a longstanding problem due to Hajnal and Juh\'{a}sz [4], which asks whether a Lindel\"{o}f space of size $\aleph_2$ must have a Lindel\"{o}f subspace of size $\aleph_1$. Notice that since $j"U = j"X \cap j(U)$ for an open set $U \subseteq X$, $\mathcal{T}$ is a weaker topology than $\mathcal{S}$. Hence, Tall [19] modified Hajnal and Juh\'{a}sz's question to ask whether a Lindel\"{o}f space of size $\aleph_2$ should have a Lindel\"{o}f weak subspace of size $\aleph_1$. We can now consistently answer this, assuming the existence of a large cardinal. Recall that a space $X$ is projectively countable if $f(X)$ is countable for every continuous map $f$ from $X$ to a separable metric space.

\begin{lemma}\label{lem30} [12] Assume that there is a strongly inaccessible cardinal $\kappa$. Then there is a model obtained by L\'{e}vy collapsing $\kappa$ to $\omega_2$ in which there are no Kurepa trees.\end{lemma}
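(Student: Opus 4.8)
The plan is to reproduce Silver's classical argument. Write $\mathbb{P} = \mathrm{Lv}(\kappa,\omega_1) = \mathrm{Coll}(\omega_1,{<}\kappa)$ for the L\'{e}vy collapse, whose conditions are the countable partial functions $p$ with $\mathrm{dom}(p)\subseteq\kappa\times\omega_1$ and $p(\alpha,\xi)<\alpha$, ordered by reverse inclusion. Since $\kappa$ is strongly inaccessible, $\mathbb{P}$ is countably closed and has the $\kappa$-chain condition, so $\mathbb{P}$ preserves $\omega_1$, collapses every cardinal in $(\omega_1,\kappa)$, preserves $\kappa$, and forces $\kappa=\omega_2$. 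Fix $G$ that is $\mathbb{P}$-generic over $V$; I claim $V[G]$ has no Kurepa tree, where I take ``Kurepa tree'' to mean a tree of height $\omega_1$ with countable levels and at least $\aleph_2$ cofinal branches (the reduction to countable levels from the a priori weaker hypothesis of levels of size $\le\aleph_1$ is a standard pruning argument).

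Assume toward a contradiction that $T\in V[G]$ is such a tree; without loss of generality its underlying set is $\omega_1$, so $T$ is coded by a subset $E\subseteq\omega_1$. The first step is to reflect $T$ into an intermediate extension. Take a nice $\mathbb{P}$-name $\dot E=\bigcup_{\xi<\omega_1}(\{\xi\}\times A_\xi)$ with each $A_\xi$ an antichain; by the $\kappa$-c.c.\ each $|A_\xi|<\kappa$, and since every condition has countable support and $\mathrm{cf}(\kappa)=\kappa>\omega_1$, there is an ordinal $\delta<\kappa$ such that every condition occurring in $\dot E$ has support contained in $\delta$. Hence $\dot E$ may be regarded as a name for $\mathbb{P}_\delta:=\mathrm{Coll}(\omega_1,{<}\delta)$, so that $T\in V[G_\delta]$ where $G_\delta=G\cap\mathbb{P}_\delta$. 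Using the factorization $\mathbb{P}\cong\mathbb{P}_\delta\times\mathbb{P}^\delta$ with $\mathbb{P}^\delta=\mathrm{Coll}(\omega_1,[\delta,\kappa))$, we have $V[G]=V[G_\delta][G^\delta]$.

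Two things then remain. First, $\mathbb{P}^\delta$ is countably closed in $V[G_\delta]$: as $\mathbb{P}_\delta$ is countably closed it adds no new $\omega$-sequences of ground-model objects, so a descending $\omega$-chain of $\mathbb{P}^\delta$-conditions from $V[G_\delta]$ already lies in $V$, where it has a lower bound. And a countably closed forcing adds no new cofinal branch to an $\omega_1$-tree: the standard argument builds a perfect tree of conditions $\langle p_s:s\in 2^{<\omega}\rangle$ along which incompatible values of a putative new branch $\dot b$ are forced, and then closure yields $2^{\aleph_0}$ conditions pinning down $2^{\aleph_0}$ distinct nodes on a single countable level of $T$, which is absurd. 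Applying this in $V[G_\delta]$ to $T$ and $\mathbb{P}^\delta$, every cofinal branch of $T$ in $V[G]=V[G_\delta][G^\delta]$ already belongs to $V[G_\delta]$. Second, since $\kappa$ is a strong limit, $|\mathbb{P}_\delta|\le|\delta|^{\aleph_0}=:\theta<\kappa$, so the number of nice $\mathbb{P}_\delta$-names for subsets of $\omega_1$ is at most $(2^\theta)^{\aleph_1}=2^{\max(\theta,\aleph_1)}<\kappa$; in particular $T$ has fewer than $\kappa$ cofinal branches in $V[G_\delta]$. Combining the two facts, $T$ has fewer than $\kappa=\aleph_2^{V[G]}$ cofinal branches in $V[G]$, contradicting that $T$ is Kurepa.

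I expect the difficulties to be organizational rather than conceptual: keeping track that $\omega_1$ is computed the same way throughout (which is exactly where countable closure of $\mathbb{P}$, and of $\mathbb{P}^\delta$ over $V[G_\delta]$, is used), justifying the factorization $\mathbb{P}\cong\mathbb{P}_\delta\times\mathbb{P}^\delta$, and ensuring the ``countable levels'' normalization of Kurepa trees is genuinely in force so the branch-preservation lemma applies — without it the lemma is false, as $2^{<\omega_1}$ under $\mathsf{CH}$ shows. The single genuinely load-bearing hypothesis is that $\kappa$ is a strong limit, used both to bound $|\mathbb{P}_\delta|$ below $\kappa$ and to keep the count of nice names below $\kappa$.
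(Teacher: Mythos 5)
The paper offers no proof of this lemma; it is quoted directly from Silver [12], and your argument is precisely Silver's classical one (reflect the tree below some $\delta<\kappa$ via the $\kappa$-c.c., factor $\mathrm{Coll}(\omega_1,{<}\kappa)\cong\mathbb{P}_\delta\times\mathbb{P}^\delta$, use countable closure of the tail to show no new branches are added to a tree with countable levels, and count nice $\mathbb{P}_\delta$-names using that $\kappa$ is a strong limit). The proof is correct, including the two points where care is genuinely needed: verifying countable closure of $\mathbb{P}^\delta$ over $V[G_\delta]$ rather than over $V$, and insisting on countable levels so that the branch-preservation lemma applies.
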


\begin{theorem}\label{thm31} Assume $CH$. If there is an uncountable Lindel\"{o}f space without a Lindel\"{o}f weak subspace of size $\aleph_1$ then there is a Kurepa tree. \end{theorem}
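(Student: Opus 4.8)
The plan is to prove the contrapositive: assuming $CH$ and that there are no Kurepa trees, I would show that every uncountable Lindel\"{o}f space $X$ has a Lindel\"{o}f weak subspace of size $\aleph_1$. Since $X/M$ is always realized as a weak subspace of $X$ by Lemma~\ref{lem5}, the strategy is to produce an elementary submodel $M$ with $|M| = \aleph_1$ such that $X/M$ is Lindel\"{o}f and $|X/M| \geq \aleph_1$. Take any $M \prec H_\lambda$ with $X \in M$, $|M| = \aleph_1$, and $\omega_1 \subseteq M$ (possible since $CH$ makes $[X]^{\leq\omega}$ and related objects manageable, and one can close off an $\omega_1$-chain of countable submodels). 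Then $X/M$ has weight at most $\aleph_1$, being generated by $\{\pi(U): U \in M \text{ cozero}\}$, and $X/M$ is $T_{3\frac12}$ by Lemma~\ref{lem3}(3).

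The heart of the matter is twofold: first, showing $X/M$ is Lindel\"{o}f, and second, showing $|X/M| = \aleph_1$ (an uncountable space whose quotient collapsed to something countable would be useless). For Lindel\"{o}fness: if $\mathcal{U}$ is an open cover of $X/M$ by basic sets $\pi(U_i)$ with $U_i \in M$ cozero, then by Lemma~\ref{lem9} the sets $U_i$ cover $X$ (since $\pi$ is onto and $\pi^{-1}\pi(U_i) = U_i$), so by Lindel\"{o}fness of $X$ countably many $U_i$ suffice, hence countably many $\pi(U_i)$ cover $X/M$. This direction is actually easy and does not need the hypotheses. So the real content is forcing $|X/M| \geq \aleph_1$, i.e., ruling out the possibility that $X/M$ is countable for every such $M$ — and it is precisely here that the absence of Kurepa trees must enter. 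The idea is that if $X/M$ were countable for a club of models $M$ of size $\aleph_1$, one could build from $X$ a tree of height $\omega_1$ with levels of size at most $\aleph_1$ (indexed by finite approximations to continuous real-valued functions, or by the separating cozero sets) whose branches code the $\geq\aleph_2$-many distinct points of $X$, contradicting the Kurepa hypothesis; $CH$ is needed to keep the levels of size $\aleph_1$ rather than $2^{\aleph_0}$.

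Concretely, I would fix a continuous surjection-free witness: since $|X| \geq \aleph_1$, for each pair of points there is $f \in C^*(X)$ separating them. Build an increasing continuous $\omega_1$-chain $(M_\alpha)_{\alpha < \omega_1}$ of countable elementary submodels with $X \in M_0$ and set $M = \bigcup_\alpha M_\alpha$. For $x \in X$, the equivalence class $[x]_{M}$ refines $[x]_{M_\alpha}$ as $\alpha$ grows, so $x \mapsto ([x]_{M_\alpha})_{\alpha < \omega_1}$ is a branch through the tree $T = \bigsqcup_{\alpha<\omega_1} (X/M_\alpha)$ ordered by $[x]_{M_\beta} <_T [x]_{M_\alpha}$ when $\beta < \alpha$ and the second class refines the first; two points of $X$ with distinct images in $X/M$ yield eventually distinct branches. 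Under $CH$ each $X/M_\alpha$ is countable (a continuous image of $X$ into a second-countable space has size $\leq \mathfrak{c} = \aleph_1$, but one can do better at countable stages — each $X/M_\alpha$ has a countable base so is second countable, hence of size $\leq \mathfrak{c}$; taking a cofinal $\omega_1$-sequence of such refinements one arranges levels of size $\leq \aleph_1$). If $|X/M| \leq \aleph_0$ for all such $M$ in a club, then $T$ has $\leq \aleph_1$ branches at each level-pruning but $\geq \aleph_2$-many eventually-distinct branches from the points of $X$ — wait, one must be careful, so the argument instead runs: if $X$ has no Lindel\"{o}f weak subspace of size $\aleph_1$ then in particular $X/M \neq \aleph_1$-sized for every $M$, forcing $|X/M|$ to jump from $\leq\aleph_0$ directly past $\aleph_1$, and the chain construction then produces a tree of height $\omega_1$ with countable levels and $\geq\aleph_2$ branches, i.e., a Kurepa tree.

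The main obstacle I anticipate is getting the tree levels to be the right size and showing the branch map is injective enough — specifically, arranging that as $\alpha \to \omega_1$ the refinements $X/M_\alpha$ genuinely separate the points that $X/M$ separates, so that $\geq \aleph_2$ distinct elements of $X/M$ give $\geq \aleph_2$ distinct branches, while each level stays of size $\leq \aleph_1$ under $CH$. One must also handle the case $|X/M| = \aleph_1$ exactly (which directly gives the desired Lindel\"{o}f weak subspace via Lemma~\ref{lem5}, with no Kurepa tree needed), so the Kurepa tree is extracted only in the remaining case $|X/M| \geq \aleph_2$ for cofinally/club-many $M$. Assembling the tree, verifying it is Kurepa (height $\omega_1$, countable levels, $\aleph_2$ branches), and checking the continuity/refinement bookkeeping across limit stages of the $M_\alpha$-chain is where the care is required; the topology (Lindel\"{o}fness, weight bounds) is routine given the earlier lemmas.
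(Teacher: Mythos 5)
Your overall strategy---prove the contrapositive, realize $X/M$ as a Lindel\"of weak subspace via Lemma~\ref{lem5}, and extract a tree of height $\omega_1$ from a continuous chain $(M_\alpha)_{\alpha<\omega_1}$ of countable submodels whose branches are indexed by $X/M$---is exactly the paper's, and the easy parts (Lindel\"ofness of $X/M$, weight bounds) are fine. But there are two genuine gaps at the heart of the argument. First, the levels of your tree must be \emph{countable}, not merely of size $\leq\aleph_1$: under $CH$ the tree $2^{<\omega_1}$ already has height $\omega_1$, levels of size $\leq\aleph_1$, and $2^{\aleph_1}\geq\aleph_2$ branches, so a tree with $\aleph_1$-sized levels contradicts nothing. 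You assert countability of $X/M_\alpha$, retract it to ``$\leq\mathfrak{c}=\aleph_1$,'' and then reassert ``countable levels'' at the end without a justification that actually closes the loop. The paper resolves this with a dichotomy you never make: either $X$ fails to be projectively countable, in which case some $f\in C(X)$ with range of size $\aleph_1$ placed in a \emph{countable} $M$ already gives $\aleph_1\leq|X/M|\leq 2^{\aleph_0}=\aleph_1$ and the theorem follows with no tree at all; or $X$ is projectively countable, in which case every $X/M_\alpha$ (a continuous image of $X$ in a separable metric space) is genuinely countable. (Your alternative---that $|X/M_\alpha|=\aleph_1$ for some countable $M_\alpha$ would already finish the proof, so all levels may be assumed countable---would also work, but it is never stated as the actual argument.)

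Second, you never arrange the lower bound on $|X/M|$. Nothing about an arbitrary union of countable models forces $M$ to separate uncountably many points; conceivably $|X/M|\leq\aleph_0$ for every $M$ you build, and then you obtain neither a weak subspace of size $\aleph_1$ nor a Kurepa tree. The paper fixes $\aleph_1$ distinct points $\{x_\gamma\}_{\gamma<\omega_1}$, chooses for each pair a separating function $f_{\gamma\beta}\in C(X)$, and explicitly inserts the $\alpha$-th such function into $M_{\alpha+1}$, so that $\pi$ is injective on the chosen set and $|X/M|\geq\aleph_1$; combined with the bound $|X/M|\leq\aleph_1$ coming from the absence of Kurepa trees, this yields the weak subspace. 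You mention that separating functions exist but never put them into the chain. Finally, the injectivity of the branch map $[x]\mapsto([x]_{M_\alpha})_{\alpha<\omega_1}$, which you flag as an ``anticipated obstacle,'' does need an argument: it follows from Lemma~\ref{lem3}(5), since a zero set in $M$ separating $[x_0]$ from $[x_1]$ already lies in some $M_\alpha$.
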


\begin{proof} Suppose $X$ is an uncountable Lindel\"{o}f space. Assume $CH$ and that there are no Kurepa trees; we will construct a weak subspace of $X$ with size $\aleph_1$. By \ref{lem5}, it suffices to construct $M$ so that $|X/M| = \aleph_1$. Suppose first that $X$ is not projectively countable. Then there is a continuous $f: X \to \mathbb{R}$ with range of size $\aleph_1$. Thus, if $f \in M$, $X/M$ must have size at least $\aleph_1$. For a countable model $M$, $X/M$ will be a Hausdorff space with a countable base, and hence $|X/M| \leq 2^{\aleph_0} = \aleph_1$.  \\

Now, consider a projectively countable $X$, and take a subset $D = \{x_\gamma\}_{\gamma < \omega_1}$ of distinct points. For each pair $(\gamma, \beta) \in \omega_1^2$ we can define a continuous $f_{
\gamma \beta}: X \to \mathbb{R}$ such that $f_{\gamma \beta}(x_\alpha) = 0$ and $f_{\gamma \beta}(x_\beta) = 1$. Enumerate $\bigcup_{(\gamma, \beta) \in \omega_1^2} f_{\gamma \beta}$ as $\{f_\alpha\}_{\alpha< \omega_1}$.\\

Take a countable elementary submodel $M_0$ with $X \in M_0$. We perform a recursive construction. Given $M_\alpha$, let $M_{\alpha+1}$ be the Sk\"{o}lem hull (closure under existential quantification) of $M_\alpha \cup {f_\alpha}$. For a limit ordinal $\lambda < \omega_1$, take $M_\lambda = \bigcup_{\alpha < \lambda} M_\alpha$. This gives us an $\in$-chain of length $\omega_1$, $M_0 \in M_1 \in \cdots \in M_\alpha \in \cdots$. Let $M = \bigcup_{\alpha < \omega_1} M_\alpha$ be the union model. Let $x_\gamma, x_\beta \in D$, then $f_{\gamma \beta} \in M$ so $x_\gamma$ is not $M$-equivalent to $x_\beta$. Thus, $\pi:X \to X/M$ is injective on $D$ and $X/M$ is uncountable. \\

Notice that since each $X/M_\alpha$ is a separable metric space, $X/M_\alpha$ is countable by projective countability of $X$. Hence, $\prod_{\alpha < \omega_1} X/M_{\alpha}$ can be regarded as a tree of height $\omega_1$ with countable levels. That is, if $[x]_\alpha \in X/M_{\alpha}$ and $[y]_\beta \in X/M_\beta$, we say $[x] \leq [y]$ if $[x]_\alpha \subseteq [y]_\beta$, where we view $[x]_\alpha$ and $[y]_\beta$ as subsets of $X$. Consider the map $F: X/M \to \prod_{\alpha < \omega_1} X/M_{\alpha}$ given by sending $[x]$ to $([x]_\alpha)_{\alpha < \omega_1}$, where $[x]_\alpha$ denotes the $M_\alpha$-equivalence class of $x$. Let $[x_0], [x_1] \in X/M$ with $[x_0] \neq [x_1]$. We know that $[x] = \bigcap\{Z: x \in Z \in M, Z$ a zero set$\}$. So, there is a zero set $Z \in M$ such that $x_0 \in Z$ and $x_1 \notin Z$. But we must have $Z \in M_\alpha$ for some $\alpha < \omega_1$. Therefore $[x_0]_\alpha \neq [x_1]_\alpha$ and we have shown that $F$ is an injection from $X/M$ to paths through the tree $\prod_{\alpha < \omega_1} X/M_{\alpha}$. Since $\prod_{\alpha < \omega_1} X/M_{\alpha}$ is not a Kurepa tree, there are only $\aleph_1$ such paths, we must have $|X/M| \leq \aleph_1$. \end{proof}
 
By Lemma \ref{lem30} and Theorem \ref{thm31} we have the following.

\begin{corollary} It is consistent relative to large cardinals that every uncountable Lindel\"{o}f space has a Lindel\"{o}f weak subspace of size $\aleph_1$. \end{corollary}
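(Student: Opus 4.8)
The plan is to read the corollary as a one-line consequence of Lemma~\ref{lem30} and Theorem~\ref{thm31}, plus a routine verification that $CH$ is not destroyed by the relevant forcing. First I would fix a ground model $V$ satisfying $GCH$ and containing a strongly inaccessible cardinal $\kappa$. This costs nothing in consistency strength: if there is a strongly inaccessible cardinal at all, then (since regularity and the strong limit property are absolute downward to $L$) the least such $\kappa$ is still strongly inaccessible in $L$, and $L \models GCH$, so we may as well work over such a $V$.

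Next I would perform the L\'{e}vy collapse $\mathrm{Lv}(\kappa,\omega_1)$ of $\kappa$ to $\omega_2$ exactly as in the discussion preceding Lemma~\ref{lem30}, passing to a generic extension $V[G]$. Two properties of $V[G]$ are required. By Lemma~\ref{lem30}, the generic may be chosen so that $V[G]$ contains no Kurepa trees. And $V[G] \models CH$: the poset $\mathrm{Lv}(\kappa,\omega_1)$ consists of countable partial functions, hence is $\sigma$-closed, so it adds no new reals and preserves $\omega_1$; therefore $2^{\aleph_0}$ is computed in $V[G]$ exactly as in $V$, where it equals $\aleph_1$.

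Finally I would apply the contrapositive of Theorem~\ref{thm31} inside $V[G]$. Since $CH$ holds there and there are no Kurepa trees, every uncountable Lindel\"{o}f space $X \in V[G]$ admits an elementary submodel $M$ with $|X/M| = \aleph_1$; as $X/M$ is a continuous image of $X$ it is Lindel\"{o}f, and by Lemma~\ref{lem5} it is homeomorphic to a weak subspace of $X$. Thus in $V[G]$ every uncountable Lindel\"{o}f space has a Lindel\"{o}f weak subspace of size $\aleph_1$, which is the assertion of the corollary.

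There is no serious obstacle here, only one point that must not be skipped: checking that $CH$ survives the collapse. This is immediate from $\sigma$-closedness, but it does rely on using the countable-condition version of $\mathrm{Lv}(\kappa,\omega_1)$ (the one invoked before Lemma~\ref{lem30}) and on having arranged $CH$ — indeed $GCH$ — in the ground model; every other step is a direct citation of the results already proved.
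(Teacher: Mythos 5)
Your proposal is correct and matches the paper's intent exactly: the paper derives this corollary directly from Lemma~\ref{lem30} and Theorem~\ref{thm31} without further comment, and your argument simply fills in the routine details (arranging $GCH$ in the ground model, noting that the $\sigma$-closed L\'{e}vy collapse preserves $CH$, and invoking Lemma~\ref{lem5} for the weak subspace). The observation that one must verify $CH$ in the extension is a worthwhile detail the paper leaves implicit, but it does not constitute a different approach.
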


It is clear from this proof that a Lindel\"{o}f space which is not projectively countable must have a Lindel\"{o}f weak subspace of size $\aleph_1$ under only $CH$. This raises the question of whether our hypothesis that there is a strongly inaccessible cardinal is necessary.

\begin{subsection}{Indestructibly Lindel\"{o}f spaces.}

In [14] Tall introduced the notion of an \textbf{indestructibly Lindel\"{o}f} space, defined to be a Lindel\"{o}f space which remains Lindel\"{o}f in any extension by countably closed forcing. That is, we say a Lindel\"{o}f space $(X, \tau)$ is indestructible if given $G$ generic for a countably closed notion of forcing, $(\check{X}, \tau(G))$ is Lindel\"{o}f. Here, $\tau(G)$ is the topology generated by $\tau$ in the forcing extension. Properties known to imply indestructibility include scatteredness, hereditary Lindel\"{o}fness and having size $\leq \aleph_1$ [14]. Conversely, Max Burke [14] observed that for $\aleph_1 \leq \kappa$, $2^\kappa$ is destructible, demonstrating that even compactness does not imply indestructibility. However, no examples of destructible Lindel\"{o}f spaces with points $G_\delta$ are known. \\

We now employ $X/M$ to give a projective characterization of indestructibility.

\begin{theorem}\label{thm5} Suppose $X$ is a Lindel\"{o}f space and that $X/M$ is indestructibly Lindel\"{o}f for every countably closed $M \prec H_\lambda$ with $X \in M$ and $|M| = \aleph_1$. Then $X$ is indestructibly Lindel\"{o}f. \end{theorem}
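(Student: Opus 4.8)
The plan is to prove the contrapositive: assuming $X$ is a Lindelöf space that is \emph{not} indestructibly Lindelöf, I will produce a countably closed elementary submodel $M \prec H_\lambda$ with $X \in M$ and $|M| = \aleph_1$ such that $X/M$ fails to be indestructibly Lindelöf. So fix a countably closed forcing poset $\mathbb{P}$ and a $\mathbb{P}$-name $\dot{\mathcal{U}}$ for an open cover of $X$ with no countable subcover in $V^{\mathbb{P}}$; by passing to a name for a cover by basic cozero sets we may assume $\dot{\mathcal{U}}$ consists of cozero sets coded in the ground model. The idea is to reflect this failure into a model $M$ of size $\aleph_1$: since the witnessing object involves only $\aleph_1$-much information once we restrict attention to an appropriate sub-poset, we can build $M$ by a continuous $\in$-chain of length $\omega_1$ of countable (or size-$\aleph_1$) elementary submodels, taking unions at limits, and arranging closure under $\omega$-sequences at each stage so that $M = \bigcup_{\alpha<\omega_1} M_\alpha$ is countably closed. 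Crucially $\mathbb{P}$, $\dot{\mathcal{U}}$, and all the relevant cozero codes can be put into $M$.

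Next I would transport the forcing to $M$. Because $M$ is countably closed and $\mathbb{P} \in M$, the poset $\mathbb{P} \cap M$ (or $\mathbb{P}^M$, the version of $\mathbb{P}$ as computed in $M$) is still countably closed, and a $(\mathbb{P}\cap M)$-generic filter $G$ over $V$ can be extended — or directly used — to interpret the name $\dot{\mathcal{U}}^M$ as an open cover of $X/M$ via the projection $\pi$. The key observation is that $\pi : X \to X/M$ pulls cozero sets of $X/M$ back to cozero sets of $X$ in $M$ (Lemma~\ref{lem9}), and conversely each cozero $U \in M$ gives the open set $\pi(U)$ with $\pi^{-1}\pi(U) = U$; this dictionary lets a cover of $X/M$ by such $\pi(U)$'s correspond to a cover of $X$ by cozero sets in $M$. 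Using elementarity, the failure of a countable subcover for $\dot{\mathcal{U}}$ in the full extension reflects to the failure of a countable subcover for the corresponding cover of $X/M$ in the extension by $\mathbb{P}\cap M$. Hence $X/M$ is destructibly Lindelöf (it is Lindelöf in $V$ as a continuous image of $X$ by Lemma~\ref{lem3}(1), but loses this after countably closed forcing), contradicting the hypothesis.

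The main obstacle is the reflection/absoluteness bookkeeping: I must ensure that a generic for $\mathbb{P}\cap M$ over $V$ actually decides enough of $\dot{\mathcal{U}}$ to witness a genuine open cover of $X/M$ with no countable subcover, and that the correspondence between subcovers upstairs and downstairs is exact. The delicate point is that $X/M$, unlike $X_M$, may have many more points than $M$ contains, so "no countable subcover of $X/M$" does not follow from mere elementarity of $M$; instead one exploits that $X/M$ has a base of the form $\{\pi(U) : U \in M \text{ cozero}\}$, so that any open cover of $X/M$ refines to one indexed by cozero sets of $M$, and a countable subcover of $X/M$ would pull back under $\pi$ to a countable subfamily of cozero sets in $M$ covering $X$ — which, by $\pi^{-1}\pi(U) = U$ and elementarity, contradicts the non-existence of a countable subcover of $\dot{\mathcal{U}}$. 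Managing the countable closure of $M$ simultaneously with getting $\mathbb{P}, \dot{\mathcal{U}}$ and all needed cozero codes into $M$, and checking $\mathbb{P} \cap M$ remains countably closed and its generics extend to (or suffice in place of) $\mathbb{P}$-generics, is where the real work lies; the rest is routine translation through the $X/M$ dictionary established in Section~1.
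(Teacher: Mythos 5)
Your translation dictionary between $X$ and $X/M$ is exactly the engine of the paper's proof: a family of ground-model cozero sets from $M$ covers $X$ if and only if its $\pi$-image covers $X/M$, because $\pi^{-1}\pi(U)=U$ for cozero $U\in M$ (Lemma \ref{lem9}). The gap is in the forcing half of your argument, where you try to work with an arbitrary countably closed destroying poset $\mathbb{P}$ directly. Two steps there do not go through as stated. First, you assert that ``the witnessing object involves only $\aleph_1$-much information once we restrict attention to an appropriate sub-poset''; this is not automatic --- $\mathbb{P}$ and the name $\dot{\mathcal{U}}$ may each have size far above $\aleph_1$, and reducing the destruction of Lindel\"{o}fness to an $\aleph_1$-sized ground-model object is precisely the nontrivial content that is missing. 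Second, the $\mathbb{P}\cap M$ maneuver is unsound: when $\mathbb{P}\not\subseteq M$, the suborder $\mathbb{P}\cap M$ need not be a regular subordering of $\mathbb{P}$, a $V$-generic filter on $\mathbb{P}\cap M$ does not coherently interpret the $\mathbb{P}$-name $\dot{\mathcal{U}}$, and elementarity of $M$ in $H_\lambda$ says nothing about how $(\mathbb{P}\cap M)$-extensions relate to $\mathbb{P}$-extensions.

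The paper closes exactly this gap by first proving a combinatorial characterization of indestructibility (Lemma \ref{lem1}): $X$ is indestructibly Lindel\"{o}f if and only if for every covering cozero tree $\{U_f : f\in\bigcup_{\alpha<\omega_1}\omega^\alpha\}$ the set of $f$ whose branch covers $X$ is dense under reverse extension. This packages the entire forcing argument (via Tall's Theorem 3 in [14] and Juh\'asz's construction in [5]) into a single ground-model object of size $\aleph_1$, which can then be placed inside a model $M$ of size $\aleph_1$; the theorem follows by pushing the tree down to a covering tree for $X/M$, applying indestructibility of $X/M$ to obtain covering branches densely often, and pulling back through $\pi^{-1}\pi(U)=U$. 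Note also that the paper argues the direct implication rather than the contrapositive, which is what the ``for every such $M$'' hypothesis is for: one needs an $M$ adapted to each given covering tree. If you want to salvage your contrapositive, the repair is to reflect a witnessing covering cozero tree into $M$, not the poset itself; without the covering-tree lemma (or an equivalent reduction of the destroying forcing to $\mathrm{Fn}(\omega_1,\omega,\omega_1)$), the proposal does not close.
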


\begin{proof} We begin by improving slightly a characterization of indestructibility given in [14]. Recall that a covering tree is a collection of open sets \[\{U_f : f \in \displaystyle{\bigcup_{\alpha < \omega_1} \omega^\alpha} \}\], with the following property. If $f: \alpha \to \omega$, $\alpha < \omega_1$ and we let $f_n$ be $f$ augmented to take the value $f(\alpha) = n$, then $(U_{f_n})_{n<\omega}$ is a cover of $X$ for any $f$. We will say a \textbf{covering cozero tree} is a covering tree consisting of cozero sets. For $f : \alpha \to \omega$, define the $f$-branch $B_f$ of the covering tree to be $(U_{f|\beta})_{\beta < \alpha}$.

\begin{lemma}\label{lem1} A space is indestructible if and only if for each covering cozero tree, the family $(f: B_f \mbox{ covers }X)$ is dense in $\displaystyle{\bigcup_{\alpha < \omega_1} \omega^\alpha}$, partially ordered by reverse extension. That is, $f \leq g$ exactly when $f$ extends $g$. \end{lemma}

\begin{proof}[Proof of Lemma \ref{lem1}]  The implication assuming indestructibility is trivial from Theorem 3 in [14], so suppose that every covering cozero tree has the property stated above. We refer to the proof of Lemma 5.3 in [5]. Suppose that $(X, \mathcal{T})$ is a Lindel\"{o}f space in the ground model such that in a countably closed forcing extension $V[G]$, $(X, \mathcal{T}(G))$ is not Lindel\"{o}f. Let $\mathcal{P}$ be the destroying partial order. Tall observed in the proof of Theorem 3 in [14] that we may assume that the destroying cover $\mathcal{F}$ consists of ground model open sets, since these form a base - by the same reasoning, we may take $\mathcal{F}$ to consist of cozero sets in the ground model. Following Juh\'{a}sz, for $f: \alpha \to \omega_1$ we recursively define conditions $p_f \in P$ and open sets $U_f \in \mathcal{U}$ such that a generic branch in the resulting tree forms a countable cover with no subcover. But this results in a covering cozero tree, so since $\displaystyle{\bigcup_{\alpha < \omega_1} \omega^\alpha}$ is dense in $\mathrm{Fn}(\omega_1, \omega, \omega_1)$ some restriction of the generic branch should be a cover, and we have a contradiction.     \end{proof}

Returning to the proof of \ref{thm5}, consider a covering cozero tree $\mathcal{F}$ for $X$. $\mathcal{F}$ has size $\aleph_1$, thus we may take an elementary submodel $M \prec H_\lambda$ such that $\mathcal{F} \subseteq M$ and $|M| = \aleph_1$. Observe that, writing $\pi$ for the mapping $X \twoheadrightarrow X/M$, $\tilde{F} = \{\pi(U): U \in \mathcal{F}\}$ is a covering tree for $X/M$, where we naturally index $\pi(U_f)$ by $f$. Consider any $f: \alpha \to \omega$, where $\alpha < \omega_1$. By the indestructibility of $X/M$, we know there is a $g: \beta \to \omega$ such that $\alpha < \beta$ and $g|\alpha = f$, with $\bigl( \pi(U_{g|\gamma}) \bigr)_{\gamma < \beta}$ a cover of $X/M$. But then, since each $U_{g|\gamma} \in M$, Lemma \ref{lem9} guarantees that $( U_{g|\gamma} )_{\gamma < \beta} = \bigl(\pi^{-1}\pi(U_{g|\gamma}) \bigr)_{\gamma < \beta}$, so $B_g$ is a branch of $\mathcal{F}$ which covers $X$. Since $g$ extends $f$, the family of such branches is dense, and we have \ref{thm5}. \end{proof}

The following is now immediate from the observation that when $|M| = \aleph_1$, $X/M$ has weight $\aleph_1$.

\begin{theorem}\label{thm4} A space is indestructibly Lindel\"{o}f if all its continuous images in $[0,1]^{\aleph_1}$ are indestructibly Lindel\"{o}f. \end{theorem}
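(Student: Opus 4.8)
The plan is to deduce this directly from Theorem~\ref{thm5} together with the classical fact that a Tychonoff space of weight at most $\kappa$ embeds homeomorphically into $[0,1]^\kappa$. So suppose $X$ is a Lindel\"{o}f space with the property that every continuous image of $X$ lying inside $[0,1]^{\aleph_1}$ is indestructibly Lindel\"{o}f, and let us show $X$ is indestructibly Lindel\"{o}f. By Theorem~\ref{thm5} it is enough to check that $X/M$ is indestructibly Lindel\"{o}f for an arbitrary countably closed $M \prec H_\lambda$ with $X \in M$ and $|M| = \aleph_1$; fix such an $M$.

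The key step is the weight estimate noted just before the statement. By Definition~\ref{def1} the space $X/M$ has a base consisting of the sets $\pi(U)$ with $U \in M$ a cozero set in $X$, and there are at most $|M| = \aleph_1$ such sets $U$, so $w(X/M) \le \aleph_1$. Since $X/M$ is Tychonoff by Lemma~\ref{lem3}(3), it therefore embeds homeomorphically into $[0,1]^{w(X/M)}$, which in turn is a subspace of $[0,1]^{\aleph_1}$; concretely one diagonalizes a family of at most $\aleph_1$ continuous $[0,1]$-valued functions on $X/M$ separating points from closed sets. Precomposing this embedding with the projection $\pi \colon X \twoheadrightarrow X/M$, which is continuous by Lemma~\ref{lem3}(1), exhibits a homeomorphic copy of $X/M$ as a continuous image of $X$ contained in $[0,1]^{\aleph_1}$.

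By hypothesis that copy, hence $X/M$ itself, is indestructibly Lindel\"{o}f. As $M$ was an arbitrary countably closed elementary submodel of size $\aleph_1$ containing $X$, Theorem~\ref{thm5} applies and $X$ is indestructibly Lindel\"{o}f, which is what we wanted. (The reverse implication, although not asserted, is equally painless: a continuous image of an indestructibly Lindel\"{o}f space is indestructibly Lindel\"{o}f, since a destroying covering cozero tree for the image pulls back through the continuous surjection to one for the domain, in the spirit of Lemma~\ref{lem1}.)

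There is no serious obstacle here --- essentially all the work was already spent in proving Theorem~\ref{thm5}, which is exactly why the result is ``immediate'' from the weight observation. The one point I would take care to address is that Theorem~\ref{thm5} requires $X$ to be Lindel\"{o}f to begin with: this should be read into the statement, the entire section being concerned with Lindel\"{o}f spaces. I would also remark (or verify) that a non-Lindel\"{o}f space cannot satisfy the stated hypothesis in the first place --- given a cozero cover with no countable subcover, an $\aleph_1$-sized piece of it, pushed into $\{0,1\}^{\aleph_1}$ via characteristic functions, produces a continuous image in $[0,1]^{\aleph_1}$ that is not Lindel\"{o}f --- so that building the Lindel\"{o}f assumption into the statement costs nothing.
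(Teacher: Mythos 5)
Your proof is correct and follows exactly the route the paper intends: Theorem \ref{thm4} is deduced from Theorem \ref{thm5} via the observation that $w(X/M)\le\aleph_1$ when $|M|=\aleph_1$, so that $X/M$ embeds in $[0,1]^{\aleph_1}$ and is thus realized as a continuous image of $X$ there. One small caveat about your closing parenthetical only: characteristic functions of cozero sets need not be continuous, and even if you instead use the defining functions $f_\alpha$ of the chosen cozero sets $U_\alpha$, the open sets $\{y: y_\alpha>0\}$ need not cover the image (points lying in no $U_\alpha$ map to the zero sequence), so that sketch does not by itself show a non-Lindel\"{o}f space must fail the hypothesis --- but this is immaterial, since the standing Lindel\"{o}f assumption of the section is to be read into the statement, as you note.
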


By analogy with projective countability, say a space is \textbf{projectively} $\mathbf{\aleph_1}$ if every continuous image in $[0,1]^{\aleph_1}$ has size $\leq \aleph_1$. We then have

\begin{theorem}\label{thm6} If a Lindel\"{o}f space is projectively $\aleph_1$, then it is indestructibly Lindel\"{o}f \end{theorem}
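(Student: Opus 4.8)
The plan is to combine Theorem \ref{thm4} with the elementary submodel machinery already established. Suppose $X$ is a Lindel\"{o}f space that is projectively $\aleph_1$. By Theorem \ref{thm4}, it suffices to show that every continuous image $Y$ of $X$ in $[0,1]^{\aleph_1}$ is indestructibly Lindel\"{o}f. Fix such a $Y$ together with a continuous surjection $q: X \twoheadrightarrow Y$. Since $X$ is projectively $\aleph_1$, we have $|Y| \leq \aleph_1$; and $Y$, being a continuous image of a Lindel\"{o}f space, is itself Lindel\"{o}f. Among the properties Tall lists as implying indestructibility is having size $\leq \aleph_1$, so $Y$ is indestructibly Lindel\"{o}f. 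As $Y$ was an arbitrary continuous image of $X$ in $[0,1]^{\aleph_1}$, Theorem \ref{thm4} now yields that $X$ is indestructibly Lindel\"{o}f.

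In slightly more detail, I would first record the two standing facts: (i) a continuous image of a Lindel\"{o}f space is Lindel\"{o}f, so each $Y \subseteq [0,1]^{\aleph_1}$ arising as a continuous image of $X$ is Lindel\"{o}f; and (ii) a Lindel\"{o}f space of size $\leq \aleph_1$ is indestructibly Lindel\"{o}f, which is one of the sufficient conditions quoted from [14] in the discussion preceding this subsection. Then projective $\aleph_1$-ness of $X$ is exactly the hypothesis that bounds $|Y| \leq \aleph_1$ for every such $Y$, so (i) and (ii) together make every continuous image of $X$ in $[0,1]^{\aleph_1}$ indestructibly Lindel\"{o}f.

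The only nontrivial ingredient is Theorem \ref{thm4}, which has already been proved, so the argument here is genuinely short. The one point deserving a sentence of care is the direction of implication in Theorem \ref{thm4}: it says a space is indestructibly Lindel\"{o}f \emph{provided} all its continuous images in $[0,1]^{\aleph_1}$ are indestructibly Lindel\"{o}f, and we are applying it to $X$ itself, whose continuous images in $[0,1]^{\aleph_1}$ we have just shown to be indestructibly Lindel\"{o}f. I do not anticipate any real obstacle; the substantive work was done in Theorem \ref{thm5} and Theorem \ref{thm4}, and this corollary-style statement merely substitutes the size hypothesis for the indestructibility hypothesis on the images.
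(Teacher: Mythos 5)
Your proof is correct and is exactly the argument the paper intends (the paper leaves \ref{thm6} unproved as an immediate consequence of Theorem \ref{thm4} together with the fact, quoted from [14], that Lindel\"{o}f spaces of size $\leq \aleph_1$ are indestructible). No issues.
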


We can extend this line of reasoning to simplify a question asked by Tall and Scheepers in [11]. First we need some definitions.

\begin{definition} The selection principle $\mathbf{\mathrm{S}^{\omega_1}_1 (\mathcal{O}, \mathcal{O})}$ is the statement that for any sequence $(\mathcal{O}_\alpha)_{\alpha < \omega_1}$ of open covers of a space, there are sets $U_\alpha \in \mathcal{O}_\alpha$ such that $(U_\alpha)_{\alpha < \omega_1}$ is a cover.\end{definition}

\begin{definition} Define a game as follows. In the $\alpha^{th}$ round, \textsc{ONE} chooses an open cover $\mathcal{O}_\alpha$ and \textsc{TWO} chooses a single $U_\alpha \in \mathcal{O}_\alpha$. \textsc{TWO} wins the game if $(U_\alpha)_{\alpha < \omega_1}$ is a cover; otherwise \textsc{ONE} wins. $\mathbf{\mathrm{G}^{\omega_1}_1 (\mathcal{O}, \mathcal{O})}$ is the statement that \textsc{ONE} has no winning strategy in this game.\end{definition}

\begin{problem} [11] In what circumstances does $\mathrm{S}^{\omega_1}_1 (\mathcal{O}, \mathcal{O})$ imply $\mathrm{G}^{\omega_1}_1 (\mathcal{O}, \mathcal{O})$? \end{problem}

\begin{lemma} [11] \label{lem2} A space $X$ is indestructibly Lindel\"{o}f if and only if $\mathrm{G}^{\omega_1}_1 (\mathcal{O}, \mathcal{O})$ holds for $X$. \end{lemma}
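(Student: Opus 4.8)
The approach is to route both directions through Lemma \ref{lem1}, which reduces indestructibility to a density statement about covering cozero trees, so that no forcing extensions need to be mentioned explicitly. Fix notation: for a covering cozero tree $T=\{U_s:s\in\bigcup_{\alpha<\omega_1}\omega^\alpha\}$ let $D_T=\{f:B_f\text{ covers }X\}$, so that by Lemma \ref{lem1}, $X$ is indestructibly Lindel\"{o}f exactly when $D_T$ is dense (under reverse extension) for every such $T$. Both implications of Lemma \ref{lem2} will be proved by contraposition, and the picture to keep in mind is: \emph{a strategy for \textsc{ONE} is essentially a covering cozero tree, and \textsc{ONE} wins a play precisely when the corresponding branch fails to cover $X$.}

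For ``indestructible $\Rightarrow\mathrm{G}^{\omega_1}_1(\mathcal{O},\mathcal{O})$'', suppose \textsc{ONE} has a winning strategy $\sigma$; we produce a covering cozero tree with empty $D_T$. First one reduces to the case that $\sigma$ always instructs \textsc{ONE} to play a countable cover by cozero sets: since $X$ is Lindel\"{o}f and $T_{3\frac{1}{2}}$, replace each cover $\sigma$ prescribes by a countable cozero cover refining it; any play against the refined strategy projects (via a fixed choice of parents) to a play against $\sigma$, and shrinking \textsc{TWO}'s chosen sets cannot turn a non-cover into a cover, so the refined strategy is still winning. Indexing \textsc{ONE}'s covers by $\omega$, build a tree in which the node $s\in\omega^{<\omega_1}$ codes the partial play where, at round $\gamma<|s|$, \textsc{TWO} chose the $s(\gamma)$-th set of the cover $\sigma$ plays at that stage, and declare $U_{s{}^\frown\langle n\rangle}$ to be the $n$-th set of the cover $\sigma$ then plays at round $|s|$, with $U_s=\emptyset$ for $|s|$ a limit. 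This is a covering cozero tree whose nontrivial entries along any branch $B_f$ are exactly \textsc{TWO}'s moves in the partial play coded by $f$; since $\sigma$ defeats \textsc{TWO}, those never cover $X$, so $D_T=\emptyset$ and Lemma \ref{lem1} makes $X$ destructible.

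For ``$\mathrm{G}^{\omega_1}_1(\mathcal{O},\mathcal{O})\Rightarrow$ indestructible'', suppose $X$ is destructible, so by Lemma \ref{lem1} some covering cozero tree $T$ has $D_T$ non-dense: there is a node $g_0\in\omega^{<\omega_1}$ such that $B_f$ fails to cover $X$ for every $f$ extending $g_0$. Passing to the subtree of extensions of $g_0$, i.e.\ setting $U'_s=U_{g_0{}^\frown s}$, gives a covering cozero tree $T'$ with $D_{T'}$ empty, because a covering branch of $T'$ would produce a covering branch of $T$ through $g_0$. Now let \textsc{ONE} follow the strategy of playing, after the partial play coded by $s$, the cover $(U'_{s{}^\frown n})_{n<\omega}$. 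In a completed play coded by $f\in\omega^{\omega_1}$, \textsc{TWO}'s moves are precisely the sets $U'_{f\upharpoonright\delta}$ for successor ordinals $\delta<\omega_1$; were they a cover of $X$, then — and this is where Lindel\"{o}fness of $X$ is used — a countable subfamily would already cover $X$, hence $B'_{f\upharpoonright\alpha}$ would cover $X$ for some $\alpha<\omega_1$, contradicting $D_{T'}=\emptyset$. So \textsc{ONE} wins every play and the game principle fails.

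The ``without loss of generality'' reductions in the first direction and the bookkeeping that identifies tree nodes with partial plays are routine. The one genuinely load-bearing point is the appeal to Lindel\"{o}fness in the second direction: a tree with no covering branch of countable length does not, a priori, stop \textsc{TWO} from assembling a cover over the full $\omega_1$ rounds, and it is exactly the Lindel\"{o}f property of $X$ that forces any such $\aleph_1$-sized cover to be witnessed already by a bounded, hence countable-length, initial segment of the branch. I would also be careful about the small but necessary passage to the subtree above $g_0$, since Lemma \ref{lem1} delivers only non-density of $D_T$ rather than global emptiness.
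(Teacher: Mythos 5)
The paper does not prove this lemma; it is quoted from Scheepers--Tall [11] without argument, so there is no in-paper proof to compare against. Your derivation is correct and is essentially the standard one: both directions are correctly routed through the covering-tree characterization (Lemma \ref{lem1}), with the two necessary non-routine points handled properly --- namely, (i) the reduction of \textsc{ONE}'s strategy to countable cozero covers via Lindel\"{o}fness plus complete regularity together with the observation that shrinking \textsc{TWO}'s selections preserves non-covering, and (ii) the use of Lindel\"{o}fness to pull an $\omega_1$-length cover assembled by \textsc{TWO} back to a countable initial segment, which is exactly where a branch of countable length enters. The passage from non-density of $D_T$ to emptiness of $D_{T'}$ on the subtree above $g_0$ is also a real step and you handle it correctly (noting $B'_f\subseteq B_{g_0{}^\frown f}$). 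One minor point worth making explicit if you write this up: a winning strategy for \textsc{ONE} defeats every \emph{complete} play of length $\omega_1$, and you need that no countable initial segment of \textsc{TWO}'s choices covers either; this follows because any partial play extends to a complete one and a cover cannot become a non-cover by adding sets, but it should be said.
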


It is clear that the property $\mathrm{S}^{\omega_1}_1 (\mathcal{O}, \mathcal{O})$ is preserved under continuous images, so we can combine Theorem \ref{thm5} and Lemma \ref{lem2} to get the following.

\begin{theorem} Suppose $X$ is a Lindel\"{o}f space and $\mathrm{S}^{\omega_1}_1 (\mathcal{O}, \mathcal{O})$ implies $\mathrm{G}^{\omega_1}_1 (\mathcal{O}, \mathcal{O})$ for every continuous image of $X$ in $[0,1]^{\omega_1}$. Then $\mathrm{S}^{\omega_1}_1 (\mathcal{O}, \mathcal{O})$ implies $\mathrm{G}^{\omega_1}_1 (\mathcal{O}, \mathcal{O})$ for $X$. In particular, if $\mathrm{S}^{\omega_1}_1 (\mathcal{O}, \mathcal{O})$ implies $\mathrm{G}^{\omega_1}_1 (\mathcal{O}, \mathcal{O})$ for Lindel\"{o}f spaces of weight $\aleph_1$, then the implication holds for all Lindel\"{o}f spaces. \end{theorem}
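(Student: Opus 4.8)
The plan is to reduce the statement to Theorem \ref{thm5} via Lemma \ref{lem2}. First I would assume $\mathrm{S}^{\omega_1}_1(\mathcal{O},\mathcal{O})$ holds for $X$ and aim to deduce $\mathrm{G}^{\omega_1}_1(\mathcal{O},\mathcal{O})$ for $X$; by Lemma \ref{lem2} the latter is equivalent to $X$ being indestructibly Lindel\"{o}f, so by Theorem \ref{thm5} it is enough to show that $X/M$ is indestructibly Lindel\"{o}f for every countably closed $M \prec H_\lambda$ with $X \in M$ and $|M| = \aleph_1$.

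So fix such an $M$. Since $|C^*(X) \cap M| \le \aleph_1$, Definition \ref{def2} realizes $X/M$ as a subspace of $[0,1]^{C^*(X) \cap M}$, and after relabelling coordinates via an injection of $C^*(X) \cap M$ into $\omega_1$ we may regard $X/M$ as a continuous image of $X$ lying inside $[0,1]^{\omega_1}$ (the relevant map being $F \circ e$ of Definition \ref{def2}, which is a continuous surjection onto $X/M$ by Lemma \ref{lem3}(1)). In particular $X/M$ is Lindel\"{o}f, being a continuous image of the Lindel\"{o}f space $X$.

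Now I would invoke both hypotheses in turn. Because $\mathrm{S}^{\omega_1}_1(\mathcal{O},\mathcal{O})$ is preserved by continuous images and holds for $X$, it holds for $X/M$. Since $X/M$ is (homeomorphic to) a continuous image of $X$ in $[0,1]^{\omega_1}$, the standing hypothesis of the theorem then gives $\mathrm{G}^{\omega_1}_1(\mathcal{O},\mathcal{O})$ for $X/M$, so by Lemma \ref{lem2} the space $X/M$ is indestructibly Lindel\"{o}f. As $M$ was an arbitrary countably closed elementary submodel of size $\aleph_1$ with $X \in M$, Theorem \ref{thm5} yields that $X$ is indestructibly Lindel\"{o}f, and hence $\mathrm{G}^{\omega_1}_1(\mathcal{O},\mathcal{O})$ holds for $X$ by Lemma \ref{lem2} again. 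For the ``in particular'' clause I would simply observe that every continuous image of a Lindel\"{o}f space inside $[0,1]^{\omega_1}$ is itself Lindel\"{o}f and of weight $\le \aleph_1$, so the special assumption that $\mathrm{S}^{\omega_1}_1(\mathcal{O},\mathcal{O})$ implies $\mathrm{G}^{\omega_1}_1(\mathcal{O},\mathcal{O})$ for Lindel\"{o}f spaces of weight $\aleph_1$ supplies exactly the hypothesis needed in the first part.

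The argument is essentially a chaining together of results already in hand, so I do not expect a genuinely hard step; the only places needing a little care are the identification of $X/M$ (for $|M| = \aleph_1$) with a continuous image of $X$ inside $[0,1]^{\omega_1}$ — that is, the weight bound $w(X/M) \le \aleph_1$ and the fact that $F \circ e$ is the relevant continuous surjection — and the routine (already asserted) observation that $\mathrm{S}^{\omega_1}_1(\mathcal{O},\mathcal{O})$ passes to continuous images. I would also make explicit that ``continuous image of $X$ in $[0,1]^{\omega_1}$'' is understood up to homeomorphism, so that the coordinate relabelling above is legitimate.
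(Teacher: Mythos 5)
Your proposal is correct and follows exactly the route the paper intends: the paper's one-line justification ("combine Theorem \ref{thm5} and Lemma \ref{lem2} with the preservation of $\mathrm{S}^{\omega_1}_1(\mathcal{O},\mathcal{O})$ under continuous images") is precisely the chain you spell out, including the observation that $X/M$ for $|M|=\aleph_1$ sits in $[0,1]^{\omega_1}$ via Definition \ref{def2}. No gaps; you have simply made explicit the details the paper leaves implicit.
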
\end{subsection}\end{section}

\section*{Acknowledgements.}

We thank F.D. Tall for introducing us to this subject, and the anonymous referee for many helpful comments.

\section*{References.}

[1] I. Bandlow. A construction in set-theoretic topology by means of elementary substructures. \emph{Z. Math. Logik Grundlag Math.}, 37(5)  467-480, 1991.\\
\\
$[2]$ A. Dow. Set theory in topology. In M. Husek and J. van Mill, editors, \emph{Recent progress in general topology (Prague 1991)}, pages 167-197. North-Holland, Amsterdam, 1992.\\
\\
$[3]$ T. Eisworth. Elementary submodels and separable monotonically normal compacta. \emph{Top. Proc.} 30(2):431-443,2006.\\
\\
$[4]$ A. Hajnal and I. Juhasz. Remarks on the cardinality of compact spaces and their Lindel\"{o}f subspaces. \emph{Proc. Amer. Math. Soc.}, 59(1):146-148, 1976.\\
\\
$[5]$ I. Juhasz. Cardinal functions. II. In K. Kunen and J.E. Vaughan, editors, \emph{Handbook of Set-theoretic Topology}, pages 63-109. North-Holland, Amsterdam, 1984.\\
\\
$[6]$ L.R. Junqueira. Upwards preservation by elementary submodels. \emph{Top. Proc.} 25:225-249, 2000\\
\\
$[7]$ L.R. Junqueira and F.D. Tall. The topology of elementary submodels. \emph{Top. Appl.} 82:239-266, 1998.\\
\\
$[8]$ L.R. Junqueira and F.D. Tall. More reflections on compactness. \emph{Fund. Math.} 176:127-141, 2003.\\
\\
$[9]$ P. Koszmider and F.D. Tall. A Lindel\"{o}f space with no Lindel\"{o}f subspace of size $\aleph_1$. \emph{Proc. Amer. Math. Soc.}, 130(9):2777-2787, 2002.\\
\\
$[10]$ K. Kunen. \emph{Set theory.} North-Holland, Amsterdam, 1980.\\
\\
$[11]$ M. Scheepers and F.D. Tall. Lindel\"{o}f indestructibility, topological games and selection principles. \emph{Fund. Math.} 210:1-46, 2010.\\
\\
$[12]$ J. Silver. The independence of Kurepa's conjecture and two-cardinal conjectures in model theory. In \emph{Axiomatic Set Theory}, pages 383-390. Amer. Math. Soc., Providence, R.I., 1971.\\
\\
$[13]$ F.D. Tall. Some applications of generalized Martin's Axiom. \emph{Top. Appl.} 57:215-248, 1994.\\
\\
$[14]$ F.D. Tall. On the cardinality of Lindel\"{o}f spaces with points $G_\delta$. \emph{Top. Appl.} 63(1):21-38, 1995.\\
\\
$[15]$ F.D. Tall. Reflecting on compact spaces. \emph{Top. Proc.}, 25:345-350, 2000.\\
\\
$[16]$ F.D. Tall. Reflections on dyadic compacta. \emph{Top. Appl.}, 137:251-258, 2004.\\
\\
$[17]$ F.D. Tall. Compact spaces, elementary submodels and the countable chain condition, II. \emph{Top. Appl.}, 153:273-278, 2006.\\
\\
$[18]$ F.D. Tall. Some problems and techniques in set-theoretic topology. \emph{Set theory and its Applications, Contemp. Math.} ed. L. Babinkostova, A, Caicedo, S. Geschke, M. Scheepers, volume 533, pages 183-209. 2011.\\
\\
$[19]$ F.D. Tall. Set-theoretic problems concerning Lindel\"{o}f spaces. \emph{Q\&A in Gen. Top.}, 29:91-103, 2011.\\
\\
$[20]$ M.D. Weir. Hewitt-Nachbin Spaces. North-Holland, New York, 1975.

\end{document}